\newcommand*{\vv}[1]{\vec{\mkern1mu#1}\!}
\newcommand{\Fin}{\mathcal{F}\kern-1pt\mathit{in}}
\newcommand{\w}{\omega}
\newcommand{\IN}{\mathbb N}
\newcommand{\IR}{\mathbb R}
\newcommand{\II}{\mathbb I}
\newcommand{\F}{\mathcal F}
\newcommand{\U}{\mathcal U}
\newcommand{\V}{\mathcal V}
\newcommand{\Law}{w^\circ}
\newcommand{\Zar}{w^\bullet}
\newcommand{\Weak}{w}
\newcommand{\Lawson}{\mathcal L}
\newcommand{\Scott}{\mathcal{S}}
\newcommand{\Interval}{\mathcal I}
\newcommand{\DLaw}{\mathcal W^\circ}
\newcommand{\DZar}{\mathcal W^\bullet}
\newcommand{\DWeak}{\mathcal W}
\newcommand{\Chain}{\mathcal C}
\newcommand{\Dedekind}{\mathcal D}
\newcommand{\Tau}{\tau}
\newcommand{\pr}{\mathrm{pr}}
\newcommand{\Ra}{\Rightarrow}
\newtheorem{theorem}{Theorem}[section]
\newtheorem{proposition}[theorem]{Proposition}
\newtheorem{theorem*}{Theorem}
\newtheorem{lemma}[theorem]{Lemma}
\newtheorem{corollary}[theorem]{Corollary}
\theoremstyle{definition}
\newtheorem{remark}[theorem]{Remark}
\newtheorem{problem}[theorem]{Problem}
\newtheorem{example}[theorem]{Example}
\title{The interplay between weak topologies on topological semilattices}
\author{Taras Banakh and Serhii Bardyla}
\dedicatory{Dedicated to the memory of W.W. Comfort}
\address{T.Banakh: Ivan Franko National University of Lviv (Ukraine) and Jan Kochanowski University in Kielce (Poland)}
\email{t.o.banakh@gmail.com}
\address{S.Bardyla: Ivan Franko National University of Lviv (Ukraine)}
\email{sbardyla@yahoo.com}
\subjclass{22A15, 22A26}
\keywords{topologized semigroup, Lawson topology, chain-compact semilattice}
\begin{document}

\begin{abstract} We study the interplay between three weak topologies on a topological semilattice $X$: the weak$^\circ$ topology $\Law_X$ (generated by the base consiting of open subsemilattices of $X$), the weak$^\bullet$ topology $\Zar_X$ (generated by the subbase consisting of complements to closed subsemilattices), and the $\II$-weak topology $\Weak_X$ (which is the weakest topology in which all continuous homomorphisms $h:X\to [0,1]$ remain continuous).   Also we study the interplay between the weak topologies $\Zar_X$, $\Law_X$, $\Weak_X$ of a topological semilattice $X$ and some intrinsic topologies, determined by the order structure of the semilattice.

We prove that the weak$^\bullet$ topology $\Zar_X$ on a Hausdorff semitopological semilattice $X$ is compact if and only if $X$ is chain-compact in the sense that each closed chain in $X$ is compact.
For a compact Hausdorff topological semilattice $X$ with topology $\Tau_X$ we prove that $\Tau_X=\Weak_X$ iff $\Tau_X=\Zar_X$ iff $\Tau_X=\Law_X$.
 \end{abstract}
\maketitle

\section{Introduction}

A {\em semigroup} is a set $X$ endowed with an associative binary operation $\cdot\colon X\times X\to X$, $\cdot\colon(x,y)\mapsto xy$. A semigroup $X$ is a {\em band} if each element $x\in X$ is an {\em idempotent} in the sense that $xx=x$. A commutative band $X$ is called a {\em semilattice}.  Each band $X$ carries the natural partial order $\le$ defined by $x\le y$ if $xy=x=yx$. A band  endowed with the natural partial order is a {\em poset} (i.e., a {\em partially ordered set}).

For a point $x$ of a poset $(X,\le)$ let ${\uparrow}x:=\{y\in X:x\le y\}$ and ${\downarrow}x:=\{y\in X:y\le x\}$ be the {\em upper} and {\em lower set} of $x$ in $(X,\le)$.
A subset $C$ of a poset $X$ is called a {\em chain} if $x\in {\uparrow}y\cup{\downarrow}y$ for any points $x,y\in C$.



A {\em topologized semigroup} is a semigroup $X$ endowed with a topology $\Tau_X$. A topologized semigroup $X$ is called a
\begin{itemize}
\item a {\em topological semigroup} if the binary operation $X\times X\to X$, $(x,y)\mapsto xy$, is continuous;
\item a {\em semitopological semigroup} if the binary operation $X\times X\to X$, $(x,y)\mapsto xy$, is separately continuous;
\item a {\em subtopological semigroup} if for any subsemigroup $S\subset X$ its closure $\bar S$ in $X$ is a subsemigroup of $X$.
\end{itemize}
It is easy to see that each topological semigroup is semitopological and any semitopological semigroup is subtopological. More information on topological semigroups can be found in the survey \cite{CHR} of Comfort, Hofmann and Remus.

In this paper we shall be mainly interested in (semi)topological semilattices.
An important example of a topological semilattice is the closed interval $\II=[0,1]$ endowed with the semilattice operation $\II\times\II\to\II$, $(x,y)\mapsto \min(x,y)$, of taking minimum.

On each topologized semigroup $X$ we shall consider three weaker topologies:
\begin{itemize}
\item[-] the {\em weak$^{\circ}$ topology} $\Law_X$, generated by the base consisting of open subsemigroups of $X$,
\item[-] the {\em weak$^{\bullet}$ topology} $\Zar_X$, generated by the subbase consisting of complements to closed subsemigroups of $X$,
\item[-] the {\em $\II$-weak topology} $\Weak_X$, generated by the subbase consisting of the preimages $h^{-1}(U)$ of open sets $U\subset\II$ under continuous semigroup homomorphisms $h:X\to\II$.
\end{itemize}

It is clear that
$$\xymatrix{
&\Law_X\ar[rd]\\
\Weak_X\ar[ru]\ar[rd]&&\tau_X&&&(*)\\
&\Zar_X\ar[ru]
}$$
where $\tau_X$ stands for the original topology of $X$ and an arrow $\tau\to\sigma$ indicates that $\tau\subset\sigma$.

A topologized semigroup $(X,\Tau_X)$ is called
\begin{itemize}
\item {\em weak$^\circ$} if $\Law_X=\Tau_X$;
\item {\em weak$^\bullet$} if $\Zar_X=\Tau_X$;
\item {\em $\II$-weak} if $\Weak_X=\Tau_X$.
\end{itemize}
In the context of topological semilattices, weak$^\circ$ topological semilattices were introduced by Lawson~\cite{Law69} (who called them semilattices with small subsemilattices) and are well-studied in Topological Algebra \cite[Ch.2]{CHK}, \cite[VI]{Bible}; $\II$-weak topological semigroups also appear naturally in the theory of topological semilattices, see \cite[Ch.2]{CHK} or \cite[VI-3.7]{Bible}. On the other hand, the notions of the weak$^\bullet$ topology $\Zar_X$ or a weak$^\bullet$ topologized semigroup seem to be new. In this paper we shall study the interplay between (separation properties) of the weak topologies $\Zar_X$, $\Law_X$, $\Weak_X$ on a topologized semilattice $X$.

The inclusion relations~$(*)$ between the topologies $\Weak_X$, $\Law_X$, $\Zar_X$ and $\Tau_X$ imply that each $\II$-weak topologized semigroup $X$ is both weak$^\circ$ and weak$^\bullet$.

To describe separation properties of the topologies  $\Law_X$, $\Zar_X$ and $\Weak_X$ let us define a topologized semigroup $X$ to be
\begin{itemize}
\item {\em $\Law$-Hausdorff} if its weak$^\circ$ topology $\Law_X$ is Hausdorff;
\item {\em $\Zar$-Hausdorff} if its weak$^\bullet$ topology $\Zar_X$ is Hausdorff;
\item {\em $\Weak$-Hausdorff} if its $\II$-weak topology $\Weak_X$ is Hausdorff;
\item {\em $\Law\Tau$-separated} if any distinct points $x,y\in X$ have disjoint neighborhoods $O_x\in\Law_X$ and $O_y\in\Tau_X$;
\item {\em $\Zar\Tau$-separated} if any distinct points $x,y\in X$ have disjoint neighborhoods $O_x\in\Zar_X$ and $O_y\in\Tau_X$;
\item {\em $\II$-separated} if for any distinct points $x,y\in X$ there exists a continuous semigroup homomorphism $h:X\to\II$ with $h(x)\ne h(y)$.
\end{itemize}
The inclusion relations $(*)$ between the topologies $\Weak_X$, $\Law_X$, $\Zar_X$, and $\Tau_X$ ensure that for a Hausdorff topologized semigroup $X$ the above separation properties relate as follows:
$$
\xymatrix{
\mbox{$\Law$-weak}\ar@{=>}[d]&\mbox{$\II$-weak}\ar@{=>}[l]\ar@{=>}[r]\ar@{=>}[d]&\mbox{$\Zar$-weak}\ar@{=>}[d]\\
\mbox{$\Law$-Hausdorff}\ar@{=>}[d]&\mbox{$\Weak$-Hausdorff}\ar@{=>}[l]\ar@{=>}[r]
&\mbox{$\Zar$-Hausdorff}\ar@{=>}[d]\\
\mbox{$\Law\Tau$-separated}&\mbox{$\II$-separated}\ar@{=>}[d]\ar@{<=>}[u]\ar@{=>}[r]\ar@{=>}[l]&\mbox{$\Zar\Tau$-separated}\\
&\mbox{semilattice.}
}
$$

We complete this diagram by three properties of topologized semilattices, determined by the order and topological structure. We recall that each semilattice $X$ carries the partial order $\le$ defined by $x\le y$ iff $xy=x$.

A topologized semilattice $X$ is defined to be
\begin{itemize}
\item a {\em $U$-semilattice} if for each open set $V\subset X$ and point $x\in V$ there exists a point $v\in V$ whose upper set ${\uparrow}v=\{y\in X:vy=v\}$ contains $x$ in its interior in $X$;
\item a {\em $W$-semilattice} if for each open set $V\subset X$ and point $x\in V$ there exists a finite subset $F\subset V$ whose upper set ${\uparrow}F:=\bigcup_{y\in F}{\uparrow}y$ contains $x$ in its interior in $X$;
\item a {\em $V$-semilattice} if for any points $x\not\le y$ in $X$ there exists a point $v\notin{\downarrow}y$ in $X$ whose upper set ${\uparrow}v$ contains $x$ in its interior in $X$.
\end{itemize}
It is clear that each (Hausdorff semitopological) $U$-semilattice is a $W$-semilattice (and a $V$-semilattice).
In Proposition~\ref{p:VCH} we shall observe that each semitopological $V$-semilattice is $\Zar$-Hausdorff and in Theorem~\ref{t:UW} we shall prove that a semitopological semilattice is a $W$-semilattice if and only if it is a $U$-semilattice if and only if $X$ is a $U$-semilattice in the sense of \cite[p.16]{CHK}. By (the proof of) Lemma 2.10 in \cite{CHK}, each Hausdorff (semi)topological $U$-semilattice $X$ is $\II$-separated.

Therefore, for a Hausdorff semitopological semilattice we obtain the following Diagram~\ref{eq2} describing the interplay between various separation properties of Hausdorff semitopological semilattices.
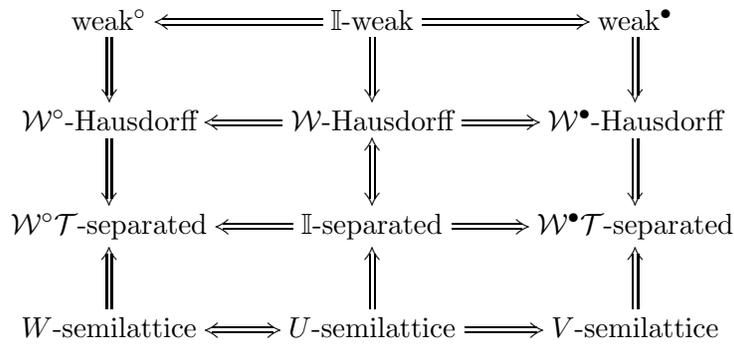
\begin{figure}[h]
\[
\xymatrix{
\mbox{weak$^\circ$}\ar@{=>}[d]&\mbox{$\II$-weak}\ar@{=>}[l]\ar@{=>}[r]\ar@{=>}[d]&\mbox{weak$^\bullet$}\ar@{=>}[d]\\
\mbox{$\Law$-Hausdorff}\ar@{=>}[d]&\mbox{$\Weak$-Hausdorff}\ar@{=>}[l]\ar@{=>}[r]\ar@{<=>}[d]
&\mbox{$\Zar$-Hausdorff}\ar@{=>}[d]\\
\mbox{$\Law\Tau$-separated}&\mbox{$\II$-separated}\ar@{=>}[l]\ar@{=>}[r]&\mbox{$\Zar\Tau$-separated}\\
\mbox{$W$-semilattice}\ar@{=>}[u]\ar@{<=>}[r]&\mbox{$U$-semilattice}\ar@{=>}[u]\ar@{=>}[r]&\mbox{$V$-semilattice}\ar@{=>}[u].
}
\]
\caption{Implications between some properties of Hausdorff semitopological semilattices}\label{eq2}
\end{figure}

One of the main results of this paper is Theorem~\ref{t:main} saying that for a compact Hausdorff semitopological semilattice all properties in Diagram~\ref{eq2} are equivalent.

Some separation properties of Diagram~\ref{eq2} remain equivalent for complete Hausdorff semitopological semilattices. A topologized semilattice $X$ is called {\em complete} if each chain in $X$ has $\inf C\in\bar C$ and $\sup C\in\bar C$ in $X$. 
Complete topologized semilattices play an important role in the theory of (absolutely) H-closed semilattices, see \cite{BBm, BBc, BBo, Bardyla-Gutik-2012, GutikRepovs2008, Stepp1969, Stepp1975}. By \cite[3.1]{BBm}, {\em a Hausdorff semitopological semilattice $X$ is complete if and only if each closed chain in $X$ is compact if and only if for any continuous homomorphism $h:S\to Y$ from a closed subsemilattice $S\subset X$ to a Hausdorff topological semigroup $Y$ the image $h(S)$ is closed in $Y$}. 

In Theorem~\ref{t:Cc} we prove that a Hausdorff semitopological semilattice $X$ is complete if and only if its weak$^\bullet$ topology $\Zar_X$ is compact.
  In Theorem~\ref{t:final} we shall prove that for a complete semitopological semilattice $X$ satisfying the separation axiom $T_1$ the following properties are equivalent:
$$\mbox{$\II$-separated $\Leftrightarrow$
$\Zar\Tau$-separated  $\Leftrightarrow$
$\Weak$-Hausdorff  $\Leftrightarrow$
$\Zar$-Hausdorff  $\Leftrightarrow$
$V$-semilattice  $\Leftrightarrow\;(\,\Weak_X=\Zar_X)$.}
$$
In the final two sections we study the interplay between the weak topologies $\Weak_X$ and $\Zar_X$ and some known intrinsic topologies on semilattices. In particular, we prove that for a (functionally) Hausdorff (semi)topological semilattice
$X$ the weak$^\bullet$ topology $\Zar_X$ coincides with the lower complete topology $\DZar_X$, which is one of the intrinsic topologies, considered by Lawson in \cite{Law}.




\section{Categorial properties of the weak$^\circ$, weak$^\bullet$, and $\II$-weak topologies}

In this section we shall establish some elementary categorial properties of the weak$^\circ$, weak$^\bullet$, and $\II$-weak topologies of topologized semigroups. 

We recall that a function $f:X\to Y$ between topological spaces is
\begin{itemize}
\item {\em continuous} if for any open set $U\subset Y$ the preimage $f^{-1}(U)$ is open in $X$;
\item {\em open} if for any open set $V\subset X$ the image $f(V)$ is open in $Y$;
\item {\em closed} if for any closed set $A\subset X$ the image $f(A)$ is closed in $Y$;
\item {\em perfect} if $f$ is closed and for every $y\in Y$ the preimage $f^{-1}(y)$ is compact;
\item a {\em topological embedding} if $f:X\to f(X)$ is a homeomorphism of $X$ onto its image $f(X)$ in $Y$.
\end{itemize}

\begin{proposition}\label{p1} Let $h:X\to Y$ be a homomorphism between topologized semigroups.
\begin{enumerate}
\item If $h$ is continuous, then $h$ remains continuous with respect to the weak$^\circ$, weak$^\bullet$, and $\II$-weak topologies on $X$ and $Y$.
\item If $h$ is open, then it remains open in the weak$^\circ$ topologies on $X$ and $Y$.
\item If $h$ is perfect, then $h$ remains perfect in the weak$^\bullet$ topologies on $X$ and $Y$.
\item If $h$ is a topological embedding and $Y$ is a subtopological semigroup, then $h$ remains a topological embedding in the weak$^\bullet$ topologies on $X$ and $Y$.
\end{enumerate}
\end{proposition}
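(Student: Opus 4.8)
The plan is to verify each claim by checking the defining (sub)bases of the weak topologies, exploiting the elementary fact that images and preimages of subsemigroups under a homomorphism are again subsemigroups. For (1), if $V\subset Y$ is an open subsemigroup then $h^{-1}(V)$ is $\Tau_X$-open (by continuity) and a subsemigroup, hence a basic $\Law_X$-set; if $F\subset Y$ is a closed subsemigroup then $h^{-1}(F)$ is a closed subsemigroup of $X$, so $h^{-1}(Y\setminus F)=X\setminus h^{-1}(F)$ is subbasic in $\Zar_X$; and if $g:Y\to\II$ is a continuous homomorphism then so is $g\circ h:X\to\II$, so preimages of subbasic $\Weak_Y$-sets are subbasic in $\Weak_X$. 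Thus preimages of (sub)basic sets are (sub)basic, giving continuity in all three weak topologies. For (2), since $h$ preserves unions it suffices to treat a basic $\Law_X$-set, i.e.\ an open subsemigroup $V\subset X$; then $h(V)$ is $\Tau_Y$-open (as $h$ is open) and a subsemigroup, hence an open subsemigroup of $Y$, i.e.\ a basic $\Law_Y$-set.

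For (3) the fibre condition is immediate: each $h^{-1}(y)$ is $\Tau_X$-compact, and since $\Zar_X\subset\Tau_X$ any $\Zar_X$-open cover is a $\Tau_X$-open cover, so $h^{-1}(y)$ remains $\Zar_X$-compact. The substantive point is $\Zar$-closedness. Given a $\Zar_X$-closed set $A$ and a point $y\in Y\setminus h(A)$, the fibre $h^{-1}(y)$ is disjoint from $A$ and hence contained in the $\Zar_X$-open set $X\setminus A$; by $\Tau_X$-compactness finitely many basic sets $X\setminus\bigcup_i F^j_i$ ($1\le j\le m$, with $F^j_i$ closed subsemigroups) cover $h^{-1}(y)$ and stay inside $X\setminus A$, forcing $A\subset\bigcap_j\bigcup_i F^j_i$. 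The key manoeuvre is to distribute this intersection of unions into a finite union $\bigcup_\sigma D_\sigma$ of the closed subsemigroups $D_\sigma=\bigcap_j F^j_{\sigma(j)}$ indexed by choice functions $\sigma$; each $D_\sigma$ is a closed subsemigroup (an intersection of such) disjoint from $h^{-1}(y)$. Pushing forward, each $h(D_\sigma)$ is a closed subsemigroup of $Y$ (closed because $h$ is $\Tau$-closed, a subsemigroup because $h$ is a homomorphism) that avoids $y$, so $V:=Y\setminus\bigcup_\sigma h(D_\sigma)$ is a basic $\Zar_Y$-neighbourhood of $y$ with $V\cap h(A)\subset V\cap\bigcup_\sigma h(D_\sigma)=\emptyset$; hence $h(A)$ is $\Zar_Y$-closed. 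I expect this distributivity step --- converting the ``union-of-basic-sets'' cover into a single finite union of closed subsemigroups --- to be the main obstacle, since a naive push-forward of the covering sets produces a union where one actually needs an intersection.

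For (4), write $X'=h(X)$ (a subsemigroup of $Y$). Continuity of $h:(X,\Zar_X)\to(X',\Zar_Y|_{X'})$ follows from (1), and $h$ is injective as a $\Tau$-embedding; it remains to show $h$ is $\Zar$-open onto $X'$. Since $h$ preserves unions I again reduce to a basic set $X\setminus\bigcup_i F_i$ with $F_i$ closed subsemigroups of $X$; by injectivity $h\big(X\setminus\bigcup_i F_i\big)=X'\setminus\bigcup_i h(F_i)$. Here $h(F_i)$ is a subsemigroup that is closed in $X'$ (as $h$ is a $\Tau$-embedding) but possibly not in $Y$, and this is exactly where the hypothesis that $Y$ is subtopological enters: the $\Tau_Y$-closure $\overline{h(F_i)}$ is then a closed subsemigroup of $Y$, and since $h(F_i)$ is closed in the subspace $X'$ one has $\overline{h(F_i)}\cap X'=h(F_i)$. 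Consequently $h\big(X\setminus\bigcup_i F_i\big)=\big(Y\setminus\bigcup_i\overline{h(F_i)}\big)\cap X'$ is the trace on $X'$ of a basic $\Zar_Y$-set, hence $\Zar_Y|_{X'}$-open, and $h$ is a $\Zar$-embedding. The crux here, and the reason the subtopological hypothesis is needed, is that closedness of the $h(F_i)$ survives only after passing to their $\Tau_Y$-closures, which remain subsemigroups precisely because $Y$ is subtopological.
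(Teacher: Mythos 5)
Your proof is correct. Parts (1), (2) and (4) coincide with the paper's arguments in all essentials: in (4) the paper phrases the conclusion as separating a $\Zar_X$-closed set from a point by a $\Zar_Y$-open set while you phrase it as relative openness of images of basic sets, but both hinge on the same use of the subtopological hypothesis, namely that the $\Tau_Y$-closures $E_i$ of the sets $h(F_i)$ are subsemigroups of $Y$ satisfying $E_i\cap h(X)=h(F_i)$. The genuine divergence is in (3), and it is to your credit. After covering the fibre $h^{-1}(y)$ by finitely many basic sets $V_x=X\setminus\bigcup\F_x$, $x\in\Phi$, disjoint from the $\Zar_X$-closed set $F$, the paper pushes forward directly and takes $C=\bigcap_{x\in\Phi}\bigcup_{E\in\F_x}h(E)$, asserting $C\subset Y\setminus\{y\}$. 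That assertion does not follow from the established disjointness $h^{-1}(y)\cap\bigcap_{x\in\Phi}\bigcup\F_x=\emptyset$: when $\Phi$ has at least two elements and the fibre more than one point, $y$ can belong to every set $\bigcup_{E\in\F_x}h(E)$, witnessed by \emph{different} fibre points for different $x$ (equivalently, the inclusion $\bigcap_{x\in\Phi}h\bigl(\bigcup\F_x\bigr)\subset h\bigl(\bigcap_{x\in\Phi}\bigcup\F_x\bigr)$ implicitly needed there fails in general). Your distributivity manoeuvre --- rewriting $\bigcap_j\bigcup_i F^j_i$ as the finite union of the closed subsemigroups $D_\sigma=\bigcap_j F^j_{\sigma(j)}$, each honestly disjoint from $h^{-1}(y)$, and only then applying $h$ --- is exactly what makes the conclusion $y\notin h(D_\sigma)$ legitimate; so the ``main obstacle'' you flagged is real, and your version in fact repairs a lacuna in the paper's own write-up of this step. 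The one thing to add is a sentence on the degenerate case $y\notin h(X)$ (empty fibre), which the paper treats separately via the $\Zar_Y$-open set $Y\setminus h(X)$; in your scheme it corresponds to the empty cover, where the single empty choice function gives $D_\sigma=X$ and $V=Y\setminus h(X)$, but that convention should be made explicit.
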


\begin{proof} 1. Assume that the homomorphism $h$ is continuous. Then for any open (or closed) subsemigroup $S\subset Y$ the preimage $h^{-1}(S)$ is an open (or closed) subsemigroup of $X$. This implies that the preimage of any (sub)basic open set of the weak$^\circ$ (weak$^\bullet$) topology on $Y$ is open in the weak$^\circ$ (weak$^\bullet$) topology of $X$, and hence $h$ is continuous in the weak$^\circ$ (or weak$^\bullet$) topologies on $X$ and $Y$. Also for any continuous homomorphism $\varphi:Y\to\II$ the composition $\varphi\circ h:X\to\II$ is a continuous homomorphism, which implies that $h$ is continuous in the $\II$-weak topologies on $X$ and $Y$.
\smallskip

2. If the homomorphism $h$ is open, then for any set $U\in\Law_X$ in the weak$^\circ$ topology on $X$ and any point $y\in h(U)$ we can choose a point $x\in U\cap h^{-1}(y)$ and find an open subsemigroup $V\subset U$ containing $x$. Since the homomorphism $h$ is open, the image $h(V)$ is an open subsemigroup of $Y$ and hence belongs to the weak$^\circ$ topology of $Y$. Taking into account that $y=h(x)\in h(V)\subset h(U)$, we see that $y$ is an interior point of $h(U)$ in the weak$^\circ$ topology of $Y$ and hence $h(U)\in\Law_Y$. So, the homomorphism $h:(X,\Law_X)\to (Y,\Law_Y)$ is open.
\smallskip

3. Now assuming that the homomorphism $h$ is perfect, we shall show that the map $h:(X,\Zar_X)\to(Y,\Zar_Y)$ is closed. Fix any closed set $F\subset X$ in the weak$^\bullet$ topology $\Zar_X$. To show that $h(F)$ is closed in the weak$^\bullet$ topology $\Zar_Y$, take any point $y\in Y\setminus h(F)$. If $y\notin h(X)$, then $Y\setminus h(X)\in\Zar_Y$ is an open neighborhood of $y$, disjoint with $h(F)$. So, we assume that $y\in h(X)$. Since $h$ is perfect and $y\notin h(F)$, the set $h^{-1}(y)$ is compact and disjoint with $F$. Since $F$ is closed in $(X,\Zar_X)$, for every $x\in h^{-1}(y)$ there is a finite family $\F_x$ of closed subsemilattices of $X$ such that the basic open set $V_x:=X\setminus\bigcup\F_x\in\Zar_X$ is an open neighborhood of $x$, disjoint with the set $F$. For the open cover $\{V_x:x\in f^{-1}(y)\}$ of the compact subset $f^{-1}(y)$ of $X$, there exists a finite subset $\Phi\subset f^{-1}(y)$ such that $f^{-1}(y)\subset\bigcup_{x\in \Phi}V_x=X\setminus \bigcap_{x\in \Phi}\bigcup\F_x$. It follows that the intersection $\bigcap_{x\in \Phi}\bigcup\F_x$ contains $F$ and is disjoint with
$h^{-1}(y)$. Since the homomorphism $h$ is closed, for any $E\in\bigcup_{x\in \Phi}\F_x$ the image $h(E)$ is a closed subsemilattice of $Y$ and hence $h(E)$ is closed in the weak$^\bullet$ topology $\Zar_Y$ of $Y$. Then the set $C=\bigcap_{x\in\Phi}\bigcup_{E\in\F_x}h(E)$ is closed in $(Y,\Zar_Y)$, and $h(F)\subset C\subset Y\setminus\{y\}$, witnessing that the set $h(F)$ is closed in $(Y,\Zar_Y)$. Since for every $y\in Y$, the compact set $f^{-1}(y)$ remains compact in the weak$^\bullet$ topology of $X$, the closed map $h:(X,\Zar_X)\to (Y,\Zar_Y)$ is perfect.
\smallskip

4. Finally assume that $h:X\to Y$ is a topological embedding and $Y$ is a subtopological semigroup. By Proposition~\ref{p1}(1), the homomorphism $h:(X,\Zar_X)\to(Y,\Zar_Y)$ is continuous. To prove that $h$ is a topological embedding, it suffices to show that for every $\Zar_X$-closed subset $F\subset X$ and any $x\in X\setminus F$ there exists an open set $V\in\Zar_Y$ such that $h(x)\in V$ and $V\cap h(F)=\emptyset$. Since $F$ is closed in $(X,\Zar_X)$, there are closed subsemigroups $F_1,\dots,F_n$ of $X$ such that $x\in X\setminus(F_1\cup\dots \cup F_n)\subset X\setminus F$. Since $h$ is a homomorphic topological embedding, for every $i\le n$ the image $h(F_i)$ is a closed subsemigroup of $h(X)$. Since $Y$ is a subtopological semigroup, the closure $E_i$ of $h(F_i)$ in $Y$ is a subsemigroup of $Y$ such that $E_i\cap h(X)=h(F_i)$. Then $V=Y\setminus(E_1\cup\dots \cup E_n)$ is an open set in $(Y,\Zar_Y)$ such that $V\cap h(X)=h(X\setminus (F_1\cup\dots\cup F_n))$ and hence $h(x)\in V$ and $V\cap h(F)=\emptyset$.
\end{proof}

Proposition~\ref{p1}(4) implies

\begin{corollary}\label{c:subcomp} For any subtopological semigroup $Y$ and any subsemigroup $X\subset Y$ the weak$^\bullet$  topology $\Zar_X$ coincides with the subspace topology on $X$ inherited from the weak$^\bullet$  topology of $Y$.
\end{corollary}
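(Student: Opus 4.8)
The plan is to apply Proposition~\ref{p1}(4) to the set-theoretic inclusion $h\colon X\hookrightarrow Y$. First I would note that, since $X$ is a subsemigroup of $Y$, the inclusion $h$ is a semigroup homomorphism, and since $X$ carries by hypothesis the subspace topology inherited from $Y$, the inclusion $h\colon X\to Y$ is tautologically a topological embedding. As $Y$ is assumed to be a subtopological semigroup, all the hypotheses of Proposition~\ref{p1}(4) are satisfied, and I may invoke it directly.

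Applying that proposition yields that $h$ remains a topological embedding with respect to the weak$^\bullet$ topologies, i.e. $h\colon(X,\Zar_X)\to(Y,\Zar_Y)$ is a homeomorphism onto its image. The underlying map is the inclusion and its image is the set $X$ itself, so this homeomorphism is nothing but the identity of $X$, regarded as a map from $(X,\Zar_X)$ onto $X$ equipped with the subspace topology inherited from $(Y,\Zar_Y)$. Unfolding the meaning of \emph{topological embedding} in this way gives exactly the asserted coincidence of the two topologies on $X$, which completes the argument.

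There is essentially no serious obstacle here, since all the work has been packaged into Proposition~\ref{p1}(4); the only step demanding a moment's care is the translation of the phrase ``$h$ is a topological embedding in the weak$^\bullet$ topologies'' into the statement that $\Zar_X$ equals the trace of $\Zar_Y$ on $X$. The subtle point worth flagging is that $\Zar_X$ is computed intrinsically, from the subsemigroups of $X$ that are closed in its own subspace topology, whereas the comparison topology is the external restriction of $\Zar_Y$. It is precisely the subtopological hypothesis on $Y$ that reconciles the two descriptions: via Proposition~\ref{p1}(4) it allows each closed subsemigroup of $X$ to be realized as the trace of a closed subsemigroup of $Y$ (namely its closure), so that the two families of subbasic closed sets generate the same topology on $X$.
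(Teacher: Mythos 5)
Your proposal is correct and follows exactly the paper's own route: the paper derives this corollary directly from Proposition~\ref{p1}(4) applied to the identity inclusion $X\hookrightarrow Y$, which is precisely your argument. Your unfolding of what ``topological embedding in the weak$^\bullet$ topologies'' means, and your remark that the subtopological hypothesis on $Y$ is what lets closed subsemigroups of $X$ be realized as traces of closed subsemigroups of $Y$, faithfully reproduce the content of the paper's proof of Proposition~\ref{p1}(4) itself.
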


Let us also observe the following characterization of the $T_i$-axiom of the weak$^\bullet$  topology on a subtopological band for $i\in\{0,1\}$.

We recall that a topological space $X$ satisfies the separation axiom
\begin{itemize}
\item $T_0$ if for any distinct points $x,y\in X$ there exists closed set $F\subset X$ containing exactly one of the points $x,y$;
\item $T_1$ if each singleton $\{x\}\subset X$ is closed in $X$.
\end{itemize}

\begin{proposition}\label{p:ZarT0} A subtopological band $X$ satisfies the separation axiom $T_0$ if and only if the weak$^\bullet$  topology $\Zar_X$ satisfies the separation axiom $T_0$.
\end{proposition}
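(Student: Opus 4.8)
The plan is to reduce the statement to a single order-topological observation about point-closures, after disposing of the trivial implication. First I would record the easy direction: since every $\Zar_X$-open set is $\Tau_X$-open by the inclusion $\Zar_X\subset\Tau_X$ from the diagram~$(*)$, any two distinct points separated by a $\Zar_X$-open set are separated in $\Tau_X$ as well; hence $\Zar_X$ being $T_0$ forces $X$ to be $T_0$. This requires no work beyond citing~$(*)$.

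For the nontrivial implication, the decisive remark is that \emph{in a band every singleton is a subsemigroup}: each $a\in X$ is idempotent, so $a\cdot a=a$ and $\{a\}$ is closed under the operation. Because $X$ is a subtopological band, the closure $\overline{\{a\}}$ of any singleton is then a subsemigroup, and of course it is closed; thus $\overline{\{a\}}$ is a subbasic closed set of the weak$^\bullet$ topology and its complement $X\setminus\overline{\{a\}}$ is a subbasic $\Zar_X$-open set. Granting this, I would fix distinct points $x,y\in X$ and invoke the Kolmogorov characterization of the $T_0$ axiom (immediate from the paper's definition: a closed set containing exactly one of $x,y$ has the closure of that one point as a subset missing the other): at least one point lies outside the $\Tau_X$-closure of the other, say $x\notin\overline{\{y\}}$, the reverse case being symmetric. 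Then $S:=\overline{\{y\}}$ is a closed subsemigroup containing $y$ but not $x$, so $X\setminus S\in\Zar_X$ is a $\Zar_X$-open neighborhood of $x$ avoiding $y$; equivalently $S$ is itself a $\Zar_X$-closed set containing exactly one of the two points. This witnesses the $T_0$ axiom for $\Zar_X$.

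I do not expect a genuine obstacle here: the only conceptual step is recognizing that point-closures are automatically closed subsemigroups in this setting, which is exactly what idempotency together with the subtopological hypothesis supplies. This converts the abstract $T_0$ separation in $\Tau_X$ into separation by a subbasic member of $\Zar_X$ with no further effort, so the whole argument is short once that observation is isolated.
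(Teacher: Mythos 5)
Your proposal is correct and follows essentially the same route as the paper: the easy direction via the inclusion $\Zar_X\subset\Tau_X$, and the key observation that in a subtopological band the closure $\overline{\{a\}}$ of any singleton is a closed subsemigroup (by idempotency), hence $\Zar_X$-closed, which turns $T_0$-separation in $\Tau_X$ into $T_0$-separation in $\Zar_X$. The only cosmetic difference is that you pass through the point-closure (Kolmogorov) formulation of $T_0$, while the paper argues directly from a closed set containing exactly one of the two points; the content is identical.
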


\begin{proof} Since $\Zar_X\subset\Tau_X$, the $T_0$-separation property of the weak$^\bullet$  topology $\Zar_X$ implies that property of the topology $\Tau_X$.

If the topology $\Tau_X$ satisfies the separation axiom $T_0$, then for any distinct points $x,y\in X$ there exists a closed subset $F\subset X$ containing exactly one of the points $x,y$. If $x\in F$, then $\overline{\{x\}}\subset F$ is a closed subsemigroup of $X$ (because $X$ is a subtopological band) and then $E:=\overline{\{x\}}$ is a $\Zar_X$-closed set containing $x$ but not $y$. If $x\notin F$, then $E:=\overline{\{y\}}$ is a $\Zar_X$-closed set containing $x$ but not $y$.
In both cases we have found a $\Zar_X$-closed set $E$ containing exactly one of the points $x,y$. This means that the weak$^\bullet$  topology $\Zar_X$ satisfies the separation axiom $T_0$.
\end{proof}

By analogy we can prove

\begin{proposition}\label{p:ZarT1} A topologized band $X$ satisfies the separation axiom $T_1$ if and only if the weak$^\bullet$  topology $\Zar_X$ satisfies the separation axiom $T_1$.
\end{proposition}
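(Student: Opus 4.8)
The plan is to mirror the proof of Proposition~\ref{p:ZarT0}, but to exploit the fact that the $T_1$-axiom is governed entirely by the closedness of singletons, which lets the argument proceed without the subtopological hypothesis. Recall that a space is $T_1$ precisely when each of its singletons is closed.

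One direction is immediate from the inclusion $\Zar_X\subset\Tau_X$ recorded in~$(*)$: if $\Zar_X$ is $T_1$, then every singleton $\{x\}$ is $\Zar_X$-closed, and since every $\Zar_X$-closed set is $\Tau_X$-closed, the singletons are $\Tau_X$-closed as well, so $\Tau_X$ is $T_1$.

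For the converse, I would use the band structure. Since $X$ is a band, every element $x\in X$ is idempotent, i.e.\ $xx=x$, so the singleton $\{x\}$ is closed under the semigroup operation and is therefore a (one-point) subsemigroup of $X$. Now assume $\Tau_X$ satisfies $T_1$. Then for each $x\in X$ the singleton $\{x\}$ is $\Tau_X$-closed, and being also a subsemigroup, it is a closed subsemigroup of $X$. By the definition of the weak$^\bullet$ topology, the complement $X\setminus\{x\}$ is a subbasic element of $\Zar_X$, so $\{x\}$ is $\Zar_X$-closed. As this holds for every $x$, the weak$^\bullet$ topology $\Zar_X$ is $T_1$.

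I do not expect any genuine obstacle here; the only point worth flagging is that, unlike the $T_0$ case of Proposition~\ref{p:ZarT0}, we never need to pass to a closure, so the subtopological assumption is unnecessary. The key observation is simply that idempotency turns each closed singleton directly into a closed subsemigroup, which is exactly what is required to witness $\Zar_X$-closedness.
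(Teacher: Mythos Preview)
Your proof is correct and follows exactly the approach the paper intends with its ``By analogy we can prove'' remark: the analogous argument to Proposition~\ref{p:ZarT0} simplifies because each singleton in a band is already a subsemigroup, so no closure (and hence no subtopological hypothesis) is needed. Your explicit observation about why the subtopological assumption drops out is precisely the point that distinguishes this case from the $T_0$ case.
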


The definitions of the weak$^\circ$ and weak$^\bullet$ topologies imply the following simple characterizations of $\Law$-Hausdorff and $\Zar$-Hausdorff topologized semigroups.

\begin{proposition} The weak$^\circ$ topology $\Law_X$ of a topologized semigroup $X$ is Hausdorff if and only if any two distinct points $x,y$ are contained in disjoint open subsemigroups of $X$.
\end{proposition}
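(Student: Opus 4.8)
The plan is to reduce both implications to the single observation that the family of open subsemigroups of $X$ is a base for the topology $\Law_X$. First I would record that this family genuinely is a base: the whole space $X$ is an open subsemigroup, so the family covers $X$, and the intersection of two open subsemigroups is again an open subsemigroup, so the family is closed under finite intersections. Consequently every $\Law_X$-open set is a union of open subsemigroups, and in particular every $\Law_X$-open neighborhood of a point $x$ contains an open subsemigroup to which $x$ belongs.

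The ``if'' direction is then immediate: if any two distinct points $x,y\in X$ lie in disjoint open subsemigroups $U_x\ni x$ and $U_y\ni y$, then $U_x$ and $U_y$ are disjoint basic neighborhoods of $x$ and $y$ in $\Law_X$, witnessing that $\Law_X$ is Hausdorff.

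For the ``only if'' direction I would start from two distinct points $x,y\in X$ and use the assumed Hausdorffness of $\Law_X$ to choose disjoint $\Law_X$-open sets $O_x\ni x$ and $O_y\ni y$. Applying the base property to each, I would find open subsemigroups $U_x$ and $U_y$ with $x\in U_x\subset O_x$ and $y\in U_y\subset O_y$; since $O_x\cap O_y=\emptyset$, these subsemigroups are disjoint, producing the required pair of disjoint open subsemigroups containing $x$ and $y$ respectively.

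There is no genuine obstacle here: the only point requiring care is the verification that the open subsemigroups form a base (and not merely a subbase), after which the equivalence is a routine unwinding of the definition of Hausdorffness in terms of a base.
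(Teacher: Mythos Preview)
Your proof is correct and matches the paper's approach exactly: the paper states this proposition without proof, saying it follows directly from the definition of $\Law_X$, and your argument is precisely the routine unwinding of that definition. The only superfluous step is your verification that the open subsemigroups form a base---this is already part of the paper's \emph{definition} of $\Law_X$ (``generated by the base consisting of open subsemigroups of $X$''), so you could simply cite the definition rather than re-derive the base axioms, though your check that the family is closed under finite intersections does confirm the definition is well-posed.
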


\begin{proposition} The weak$^\bullet$ topology $\Zar_X$ of a topologized semigroup $X$ is Hausdorff if and only if for any distinct points $x,y\in X$ there exists a finite cover $\F$ of $X$ by closed subsemigroups of $X$ such no set $F\in\F$ contains both points $x$ and $y$.
\end{proposition}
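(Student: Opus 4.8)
The plan is to unwind the definition of the weak$^\bullet$ topology and observe that the stated combinatorial condition is exactly a rephrasing of the existence of disjoint basic neighborhoods. The starting point is a description of the basic open sets of $\Zar_X$: since a subbasic open set has the form $X\setminus S$ for a closed subsemigroup $S\subset X$, and finite intersections satisfy $\bigcap_{i=1}^n(X\setminus S_i)=X\setminus\bigcup_{i=1}^n S_i$, every basic open set of $\Zar_X$ is of the form $X\setminus(S_1\cup\dots\cup S_n)$ with each $S_i$ a closed subsemigroup. I would record this at the outset, since both implications rest on it.

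For the ``only if'' part, suppose $\Zar_X$ is Hausdorff and fix distinct $x,y\in X$. Choose disjoint $\Zar_X$-open sets separating them and shrink them to basic neighborhoods $U=X\setminus(A_1\cup\dots\cup A_m)\ni x$ and $V=X\setminus(B_1\cup\dots\cup B_n)\ni y$, where the $A_i,B_j$ are closed subsemigroups; note that $x\notin A_i$ for all $i$ and $y\notin B_j$ for all $j$. Disjointness of $U$ and $V$ means precisely that $X=\bigcup_i A_i\cup\bigcup_j B_j$, so $\F=\{A_1,\dots,A_m,B_1,\dots,B_n\}$ is a finite cover of $X$ by closed subsemigroups. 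Since $x$ lies in no $A_i$ and $y$ lies in no $B_j$, no member of $\F$ contains both $x$ and $y$, which is the required condition.

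For the ``if'' part, given distinct $x,y$ take a finite cover $\F$ of $X$ by closed subsemigroups with no member containing both points, and set $U=X\setminus\bigcup\{F\in\F:x\notin F\}$ and $V=X\setminus\bigcup\{F\in\F:y\notin F\}$. These are $\Zar_X$-open, and $x\in U$, $y\in V$, since a point lies in none of the sets from which it is explicitly excluded. The key computation is $U\cap V=\emptyset$: every $F\in\F$ omits $x$ or omits $y$ by hypothesis, so each $F$ is contained in $\bigcup\{F\in\F:x\notin F\}\cup\bigcup\{F\in\F:y\notin F\}$; as $\F$ covers $X$, this union is all of $X$, whence $U\cap V=X\setminus X=\emptyset$.

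I do not expect a genuine obstacle here: the whole argument is a translation between the language of basic open sets and that of finite covers. The only points that require a little care are the passage to basic neighborhoods in the forward direction (using that the sets $X\setminus(S_1\cup\dots\cup S_n)$ form a base) and keeping track of which closed subsemigroups are being excluded, so that the two complement expressions cover $X$ upon intersection.
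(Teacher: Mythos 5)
Your proof is correct and is exactly the argument the paper has in mind: the paper states this proposition without proof, as an immediate consequence of the definition of $\Zar_X$, and your unwinding of basic open sets into finite covers (and back) is precisely that intended verification. Both directions are sound, including the key observation that disjointness of two basic neighborhoods $X\setminus(A_1\cup\dots\cup A_m)$ and $X\setminus(B_1\cup\dots\cup B_n)$ is equivalent to the sets $A_i,B_j$ covering $X$.
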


\section{Shift-continuity of the $\II$-weak, weak$^\circ$ and weak$^\bullet$  topologies}

A topology $\tau$ on a semigroup $X$ is called {\em shift-continuous} if for any $a\in X$ the left shift $\ell_a:X\to X$, $\ell_a:x\mapsto ax$, and the right shift $r_a:X\to X$, $r_a:x\mapsto xa$, both are continuous. This happens if and only if $(X,\tau)$ is a semitopological semigroup.

\begin{proposition} For any semitopological semigroup $X$, the $\II$-weak topology $\Weak_X$ is shift-continuous and hence $(X,\Weak_X)$ is a semitopological semigroup.
\end{proposition}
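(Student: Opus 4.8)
The plan is to show that the $\II$-weak topology $\Weak_X$ is shift-continuous by verifying that each left shift $\ell_a$ and right shift $r_a$ is continuous with respect to $\Weak_X$, and then invoke the stated equivalence ``shift-continuous $\Leftrightarrow$ semitopological''. Since $\Weak_X$ is generated by the subbase of sets $h^{-1}(U)$ where $h\colon X\to\II$ is a continuous homomorphism and $U\subset\II$ is open, it suffices to check that the preimage of each such subbasic set under $\ell_a$ (and under $r_a$) is $\Weak_X$-open. This is the standard reduction: continuity with respect to a topology generated by a subbase need only be checked on subbasic sets.

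First I would fix $a\in X$ and a continuous homomorphism $h\colon X\to\II$, and examine the composition $h\circ\ell_a\colon X\to\II$, which sends $x\mapsto h(ax)$. The key computation is that since $h$ is a homomorphism and $\II$ is commutative, $h(ax)=h(a)\,h(x)=\min(h(a),h(x))$, so $h\circ\ell_a=\ell_{h(a)}\circ h$, where $\ell_{h(a)}\colon\II\to\II$ is the left shift $t\mapsto\min(h(a),t)$ in $\II$. The map $\ell_{h(a)}$ is continuous (indeed $\II$ is a topological semilattice under $\min$), and $h$ is continuous as a map into $\II$; hence $h\circ\ell_a$ is continuous as an ordinary map $X\to\II$ with respect to the original topology $\Tau_X$.

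Next I would upgrade this to continuity with respect to $\Weak_X$. Given a subbasic $\Weak_X$-open set $h^{-1}(U)$, its preimage under $\ell_a$ is $\ell_a^{-1}(h^{-1}(U))=(h\circ\ell_a)^{-1}(U)$. By the previous paragraph $h\circ\ell_a$ is itself a continuous homomorphism from $X$ to $\II$ (a composition of the homomorphism $h$ with the homomorphism $\ell_{h(a)}$ of $\II$), so $(h\circ\ell_a)^{-1}(U)$ is exactly a subbasic member of $\Weak_X$ and in particular is $\Weak_X$-open. This shows $\ell_a\colon(X,\Weak_X)\to(X,\Weak_X)$ is continuous. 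The argument for the right shift $r_a$ is identical, using $h(xa)=\min(h(x),h(a))$ and the commutativity of $\min$.

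I do not anticipate a serious obstacle here; the proof is essentially a formal verification. The one point requiring mild care is the observation that $h\circ\ell_a$ is again a \emph{continuous homomorphism into $\II$}, so that its preimages are genuinely subbasic for $\Weak_X$ rather than merely $\Tau_X$-open; this is what makes the shifts continuous in the weak topology and not just in $\Tau_X$. This rests on the fact that the self-maps $t\mapsto\min(c,t)$ of $\II$ are themselves continuous semilattice homomorphisms, which holds because $\II$ is a commutative topological semilattice. Once this is in place, continuity on the subbase gives continuity of the shifts, and the stated equivalence yields that $(X,\Weak_X)$ is a semitopological semigroup.
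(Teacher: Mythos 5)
Your proof is correct, and its skeleton is the same as the paper's: reduce continuity of the shifts to subbasic sets and show that $h\circ\ell_a$ is again a continuous semigroup homomorphism into $\II$. The difference lies in how that is verified. The paper checks the homomorphism property by a direct computation inside $\II$, using idempotency and commutativity of the image: $h(axy)=h(a)h(x)h(y)=h(a)h(a)h(x)h(y)=h(ax)h(ay)$, and it gets the continuity of $h\circ\ell_a$ from the hypothesis that $X$ is semitopological (continuity of $\ell_a$ in $\Tau_X$, composed with $h$). You instead factor $h\circ\ell_a=\ell_{h(a)}\circ h$ through the shift of $\II$ itself; the identity $\min\bigl(c,\min(s,t)\bigr)=\min\bigl(\min(c,s),\min(c,t)\bigr)$ that makes $\ell_{h(a)}$ an endomorphism of $\II$ is exactly the paper's computation in abstract form, but your factorization also yields the \emph{continuity} of $h\circ\ell_a$ from the continuity of $\min$ on $[0,1]$ rather than from the separate continuity of the operation of $X$. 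As a result, your argument nowhere uses that $X$ is semitopological, so it actually proves the stronger statement that the $\II$-weak topology of an arbitrary topologized semigroup is shift-continuous; the paper's proof, as written, needs the standing hypothesis. This extra generality (and the conceptual packaging ``$\II$ is a topological, hence shift-continuous and shift-homomorphic, semilattice'') is what your route buys; the paper's route is more self-contained at the level of a single displayed computation.
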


\begin{proof} We need to prove that for every $a\in X$ the left and right shifts $\ell_a,r_a:X\to X$ are continuous with respect to the $\II$-weak topology $\Weak_X$. This will follow as soon as we prove that for any continuous homomorphism $h:X\to\II$ the compositions $h\ell_a:=h\circ \ell_a$ and $hr_a:=h\circ r_a$ are homomorphisms of $X$ into $\II$. Indeed, for any $x,y\in X$ we get
\begin{multline*}
h\ell_a(xy)=h(axy)=h(a)h(x)h(y)=h(a)h(a)h(x)h(y)=\\
=h(a)h(x)h(a)h(y)=h(ax)h(ay)=h\ell_a(x)\cdot h\ell_a(y),
\end{multline*}
which means that $h\ell_a:X\to\II$ is a continuous homomorphism. By analogy we can check that $hr_a$ is a homomorphism.
\end{proof}

\begin{example} For the discrete two-element group $X=\{1,a\}$ with generator $a$ the weak$^\circ$ and weak$^\bullet$ topologies $$\Law_X=\big\{\emptyset,\{1\},X\big\}\mbox{ \ and }\Zar_X=\big\{\emptyset,\{a\},X\big\}$$are not shift-continuous and hence $(X,\Law_X)$ and $(X,\Zar_X)$ are not semitopological semigroups. On the other hand, the $\II$-weak topology $\Weak_X=\{\emptyset,X\}$ is anti-discrete and hence is shift-continuous.
\end{example}

A topological semigroup $X$ is called {\em shift-homomorphic} if for any $a\in X$ the left shift $\ell_a:X\to X$, $\ell_a:x\mapsto ax$, and the right shift $r_a:X\to X$, $r_a:x\mapsto xa$, both are homomorphisms of $X$. The following characterization can be derived from the definitions.

\begin{proposition}\label{p2} A semigroup $X$ is shift-homomorphic if and only if $axay=axy$ and $xaya=xya$ for any $x,y,a\in X$.
\end{proposition}

Proposition~\ref{p2} implies that each semilattice is shift-homomorphic. It is easy to construct examples of shift-homomorphic semigroups which are not semilattices.

Proposition~\ref{p1}(1) implies the following useful fact.

\begin{proposition}\label{p3} If $X$ is a shift-homomorphic semitopological semigroup, then $(X,\Law_X)$ and $(X,\Zar_X)$  are semitopological semigroups.
\end{proposition}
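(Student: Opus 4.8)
The plan is to recognize that the two hypotheses on $X$ make every shift simultaneously a continuous map and a homomorphism, so that the functoriality statement Proposition~\ref{p1}(1) can be applied verbatim, with the source and the target both taken to be $X$ itself.

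Concretely, I would fix an arbitrary $a\in X$ and consider the left shift $\ell_a:X\to X$, $x\mapsto ax$. Since $X$ is a semitopological semigroup, $\ell_a$ is continuous with respect to the original topology $\Tau_X$; since $X$ is shift-homomorphic, Proposition~\ref{p2} tells us that $\ell_a$ is a homomorphism of $X$ into itself. Hence $\ell_a$ is a \emph{continuous homomorphism} $X\to X$, and Proposition~\ref{p1}(1), instantiated in the case $Y=X$ with $h=\ell_a$, guarantees that $\ell_a$ stays continuous when $X$ is equipped with the weak$^\circ$ topology $\Law_X$ on both the domain and the range, and likewise when $X$ is equipped with the weak$^\bullet$ topology $\Zar_X$. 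The same reasoning applied to the right shift $r_a:x\mapsto xa$ shows that $r_a$ is continuous for $\Law_X$ and for $\Zar_X$ as well.

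Since $a\in X$ was arbitrary, all left and right shifts are continuous in $(X,\Law_X)$ and in $(X,\Zar_X)$; by the characterization of shift-continuity this is exactly the assertion that both topologies are shift-continuous, so $(X,\Law_X)$ and $(X,\Zar_X)$ are semitopological semigroups, as claimed.

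I do not expect any genuine obstacle here: the proof is a one-line consequence of the preceding machinery once the shifts are viewed as self-maps. The only point that needs a moment's attention is that Proposition~\ref{p1}(1) is phrased for a homomorphism $h:X\to Y$ between possibly distinct semigroups, whereas here it must be used in the degenerate situation $Y=X$ so that the weak topologies $\Law_Y$ and $\Zar_Y$ on the range coincide with $\Law_X$ and $\Zar_X$ on the domain; after adopting this self-map viewpoint, continuity of the shifts in the weak topologies is automatic.
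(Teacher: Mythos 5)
Your proof is correct and is exactly the argument the paper intends: the paper justifies Proposition~\ref{p3} by the single remark that it follows from Proposition~\ref{p1}(1), i.e., precisely by viewing each shift $\ell_a$, $r_a$ as a continuous self-homomorphism of $X$ (continuity from the semitopological hypothesis, the homomorphism property from shift-homomorphicity) and applying the functoriality statement with $Y=X$. Your only cosmetic slip is crediting Proposition~\ref{p2} for the shifts being homomorphisms, which is in fact the definition of shift-homomorphic (Proposition~\ref{p2} merely characterizes it by identities); this does not affect the argument.
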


Since semilattices are shift-homomorphic semigroups, Proposition~\ref{p3} implies

\begin{corollary}\label{c:Cst} If $X$ is a semitopological semilattice, then $(X,\Law_X)$ and $(X,\Zar_X)$ are semitopological semilattices.
\end{corollary}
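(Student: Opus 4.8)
The plan is to derive this directly from Proposition~\ref{p3}, whose hypothesis asks that $X$ be a shift-homomorphic semitopological semigroup. A semitopological semilattice is by definition a semitopological semigroup whose multiplication is commutative and idempotent, so the only nontrivial point to check is that every semilattice is shift-homomorphic.

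To verify the shift-homomorphic property I would invoke the criterion of Proposition~\ref{p2}, which reduces the claim to the two identities $axay=axy$ and $xaya=xya$ for all $x,y,a\in X$. In a semilattice the operation is commutative and every element is idempotent, so $axay=aaxy=axy$, where one first commutes the two middle factors and then collapses $aa$ to $a$; the second identity is obtained in the same way (and coincides with the first by commutativity). Hence the underlying semigroup of $X$ is shift-homomorphic.

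With the hypothesis of Proposition~\ref{p3} verified, that proposition yields at once that $(X,\Law_X)$ and $(X,\Zar_X)$ are semitopological semigroups. It then remains only to note that replacing the original topology by the weaker topology $\Law_X$ or $\Zar_X$ leaves the algebraic operation untouched: the underlying semigroup is still the commutative band $X$. Consequently $(X,\Law_X)$ and $(X,\Zar_X)$ are not merely semitopological semigroups but semitopological semilattices, which is exactly the assertion of Corollary~\ref{c:Cst}.

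I expect no genuine obstacle in this argument: the statement is a formal consequence of Proposition~\ref{p3} together with the elementary observation that semilattices satisfy the shift-homomorphic identities. The only place deserving a moment of care is the bookkeeping of commutativity and idempotence in the computation $axay=axy$, which is entirely routine.
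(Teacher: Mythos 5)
Your proof is correct and follows exactly the paper's route: the paper also deduces Corollary~\ref{c:Cst} from Proposition~\ref{p3} combined with the remark (following Proposition~\ref{p2}) that every semilattice is shift-homomorphic, which is precisely the identity check $axay=aaxy=axy$ you carry out. Your only addition is to spell out this computation and the trivial observation that the operation remains a semilattice operation under the weaker topologies, both of which the paper leaves implicit.
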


\begin{problem} Is there a semitopological band $X$ for which the topologized bands $(X,\Law_X)$ and $(X,\Zar_X)$ are not semitopological?
\end{problem}

\section{Examples of weak$^\circ$ and weak$^\bullet$ topological semigroups}

In this section we present some examples of weak$^\circ$, weak$^\bullet$ and $\II$-weak topologized semigroups. We recall that a topologized semigroup $(X,\Tau_X)$ is called {\em $\II$-weak}, (resp. {\em weak$^\circ$}, {\em weak$^\bullet$}) if $\Tau_X=\Weak_X$ (resp. $\Tau_X=\Law_X$, $\Tau_X=\Zar_X)$.

A semigroup $X$ is called {\em linear} if $xy\in\{x,y\}$ for any $x,y\in X$. It is clear that each subset of a linear semigroup is a subsemigroup. Consequently,
we have

\begin{proposition} Each linear topologized semigroup is both weak$^\circ$ and weak$^\bullet$.
\end{proposition}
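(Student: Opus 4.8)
The plan is to exploit the defining property of linearity directly, via the observation already recorded in the excerpt that in a linear semigroup every subset is a subsemigroup. Indeed, if $A\subseteq X$ and $x,y\in A$, then $xy\in\{x,y\}\subseteq A$, so $A$ is closed under the operation. I would begin by invoking the inclusions $\Law_X\subseteq\Tau_X$ and $\Zar_X\subseteq\Tau_X$ from diagram $(*)$, so that in each case it suffices to establish the reverse inclusion.

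For the weak$^\circ$ part, I would argue that since every subset of $X$ is a subsemigroup, every $\Tau_X$-open set is in particular an open subsemigroup of $X$ and therefore belongs to the base generating $\Law_X$. Hence $\Tau_X\subseteq\Law_X$, which together with $\Law_X\subseteq\Tau_X$ yields $\Law_X=\Tau_X$, i.e.\ $X$ is weak$^\circ$.

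For the weak$^\bullet$ part, I would reason symmetrically on the closed side: every $\Tau_X$-closed set is a closed subsemigroup of $X$, so its complement — an arbitrary $\Tau_X$-open set — is a subbasic member of $\Zar_X$. This gives $\Tau_X\subseteq\Zar_X$, and combined with $\Zar_X\subseteq\Tau_X$ we conclude $\Zar_X=\Tau_X$, i.e.\ $X$ is weak$^\bullet$.

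There is no genuine obstacle here: the whole argument rests on the elementary fact that linearity forces every subset to be a subsemigroup, after which both equalities are immediate from the definitions of the base of $\Law_X$ and the subbase of $\Zar_X$. The only point worth stating carefully is that no topological hypothesis beyond the ambient topology $\Tau_X$ is needed — linearity alone makes the collections of open (resp.\ closed) subsemigroups coincide with the collections of all open (resp.\ closed) sets.
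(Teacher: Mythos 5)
Your proof is correct and follows exactly the paper's own reasoning: the paper derives the proposition from the single observation that every subset of a linear semigroup is a subsemigroup, so that all open (resp.\ closed) sets are open (resp.\ closed) subsemigroups and the inclusions $\Tau_X\subseteq\Law_X$ and $\Tau_X\subseteq\Zar_X$ follow immediately, matching your argument. The only difference is that you spell out the details the paper leaves implicit.
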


\begin{example} There exists a linear topological semilattice which is not $\II$-weak.
\end{example}

\begin{proof} Consider the set $X=\{0\}\cup\{\frac1n:n\in\IN\}$ endowed with the semilattice operation of minimum. Endow $X$ with the topology $\Tau_X$ consisting of sets $U\subset X$ having the property:
\begin{itemize}
\item[$(\star)$]  if $0\in U$, then $\frac1{2n}\in U$ for all but finitely many numbers $n\in\IN$.
\end{itemize}
It is clear that $X$ is a linear topological semilattice. So, $X$ is both weak$^\circ$ and weak$^\bullet$. We claim that $X$ is not $\II$-weak. In the opposite case we could find continuous homomorphisms $h_1,\dots,h_k:X\to\II$ and open sets $U_1,\dots,U_k\subset\II$ such that $$0\in \bigcap_{i=1}^k h_i^{-1}(U_i)\subset \{0\}\cup\{\tfrac1{2n}:n\in\IN\}\in\Tau_X.$$
Replacing each set $U_i$ by a smaller open set, we can assume that $U_i$ is order-convex in $\II$. Since $0$ is non-isolated in $X$, there is a number $n\in\IN$ such that $\frac1{2n}\in \bigcap_{i=1}^nh_i^{-1}(U_i)$. For every $i\le n$ the inequality $0\le\frac1{2n+1}\le\frac1{2n}$ implies $h_i(0)\le h_i(\frac1{2n+1})\le h_i(\frac1{2n})$. Since $h_i(0),h_i(\frac1{2n})\in U_i$, the order-convexity of $U_i$ ensures that $h_i(\frac1{2n+1})\in U_i$. Then $\frac1{2n+1}\in\bigcap_{i=1}^kh_i^{-1}(U_i)\subset \{0\}\cup\{\frac1{2m}:m\in\IN\}$, which is a desired contradiction showing that $\Tau_X\ne\Weak_X$.

The above example $X$ was first considered by Gutik and Repov\v s in \cite{GutikRepovs2008}.
\end{proof}

\begin{proposition}\label{p:sub} Each subsemigroup $X$ of an $\II$-weak (resp. weak$^\circ$, weak$^\bullet$) semigroup $Y$ is $\II$-weak (resp. weak$^\circ$, weak$^\bullet$).
\end{proposition}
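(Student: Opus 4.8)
The plan is to treat all three cases (weak$^\circ$, weak$^\bullet$, $\II$-weak) by a single uniform argument based on the inclusion map $i\colon X\hookrightarrow Y$. Recall that $X$ carries the subspace topology inherited from $(Y,\Tau_Y)$, so $\Tau_X=\{U\cap X: U\in\Tau_Y\}$, and that the relations $(*)$ give the automatic inclusions $\Law_X\subset\Tau_X$, $\Zar_X\subset\Tau_X$, and $\Weak_X\subset\Tau_X$. Thus in each case one inclusion comes for free, and the whole problem reduces to establishing the reverse inclusion $\Tau_X\subset\mathcal O_X$, where $\mathcal O$ stands uniformly for any one of $\Law$, $\Zar$, or $\Weak$ (and $\mathcal O_Y$ for the corresponding construction on $Y$).

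The key step is to observe that $i\colon X\to Y$ is a continuous homomorphism: it is a homomorphism because $X$ is a subsemigroup of $Y$, and it is continuous precisely because $\Tau_X$ is the subspace topology. Applying Proposition~\ref{p1}(1), the inclusion remains continuous with respect to each of the weak$^\circ$, weak$^\bullet$, and $\II$-weak topologies on $X$ and $Y$. Unwinding what continuity of $i\colon (X,\mathcal O_X)\to(Y,\mathcal O_Y)$ means, for every $\mathcal O_Y$-open set $U\subset Y$ the trace $i^{-1}(U)=U\cap X$ lies in $\mathcal O_X$; equivalently, the subspace topology $\{U\cap X: U\in\mathcal O_Y\}$ induced by $\mathcal O_Y$ on $X$ is contained in $\mathcal O_X$.

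It now suffices to combine this with the hypothesis $\Tau_Y=\mathcal O_Y$. Since the original topology of $Y$ equals its weak topology, the subspace topology on $X$ satisfies $\Tau_X=\{U\cap X: U\in\Tau_Y\}=\{U\cap X: U\in\mathcal O_Y\}\subset\mathcal O_X$, and together with the automatic inclusion $\mathcal O_X\subset\Tau_X$ this yields the desired equality $\Tau_X=\mathcal O_X$, i.e.\ $X$ is weak$^\circ$ (resp.\ weak$^\bullet$, $\II$-weak).

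I do not expect a serious obstacle here; the only points requiring care are conceptual rather than technical. First, the argument deliberately uses only the one-sided conclusion of Proposition~\ref{p1}(1) (continuity of the inclusion), and not the finer statements about open, perfect, or embedding maps, so that no subtopological hypothesis on $Y$ (as would be needed to invoke Corollary~\ref{c:subcomp}) is required. Second, for the $\II$-weak case one should note explicitly that the composition $\varphi\circ i=\varphi|_X$ of a continuous homomorphism $\varphi\colon Y\to\II$ is again a continuous homomorphism $X\to\II$, which is exactly what feeds the subbase of $\Weak_X$; the possible existence of extra homomorphisms on $X$ that do not extend to $Y$ only enlarges $\Weak_X$ and is harmless, since such homomorphisms are anyway $\Tau_X$-continuous and the inclusion $\Weak_X\subset\Tau_X$ remains intact.
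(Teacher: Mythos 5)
Your proof is correct and follows essentially the same route as the paper's: both apply Proposition~\ref{p1}(1) to the inclusion homomorphism $i\colon X\to Y$, combine the resulting continuity of $i\colon(X,\mathcal O_X)\to(Y,\mathcal O_Y)$ with the hypothesis $\Tau_Y=\mathcal O_Y$ and the fact that $\Tau_X$ is the subspace topology, and close with the automatic inclusion $\mathcal O_X\subset\Tau_X$. The paper phrases this as a chain of continuous identity embeddings $(X,\Tau_X)\to(X,\Weak_X)\to(Y,\Weak_Y)=(Y,\Tau_Y)$ and handles the three cases ``by analogy,'' whereas you unwind the continuity into traces of open sets and treat the cases uniformly --- a purely presentational difference.
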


\begin{proof} By Proposition~\ref{p1}(1), the identity embedding $i:X\to Y$ is continuous in the $\II$-weak topologies on $X$ and $Y$. If the topologized semigroup $Y$ is $\II$-weak, then $\Tau_Y=\Weak_Y$ and we have the chain of continuous identity embeddings:
$$(X,\Tau_X)\to (X,\Weak_X)\to(Y,\Weak_Y)=(Y,\Tau_Y).$$
Taking into account that the identity map $(X,\Tau_X)\to (Y,\Tau_Y)$ is a topological embedding, we conclude that the identity map $(X,\Weak_X)\to(X,\Tau_X)$ is continuous and hence $\Tau_X=\Weak_X$, which means that the topologized semigroup $X$ is $\II$-weak.

By analogy we can prove that $X$ is weak$^\circ$ (or weak$^\bullet$) if so is the topologized semigroup $Y$.
\end{proof}

\begin{proposition}\label{p:product} The Tychonoff product $X=\prod_{\alpha\in A}X_\alpha$ of\/ $\II$-weak (resp. weak$^\circ$, weak$^\bullet$) topologized semigroups is $\II$-weak (weak$^\circ$, weak$^\bullet$).
\end{proposition}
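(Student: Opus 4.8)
The plan is to reduce all three assertions to a single reverse inclusion and then exploit the fact that the coordinate projections are continuous open homomorphisms that pull back subsemilattice structure. By the inclusion relations $(*)$ we always have $\Weak_X\subset\Tau_X$, $\Law_X\subset\Tau_X$, and $\Zar_X\subset\Tau_X$, so in each case it suffices to prove the reverse inclusion $\Tau_X\subset\Weak_X$ (resp.\ $\Tau_X\subset\Law_X$, $\Tau_X\subset\Zar_X$). I would record first that the product topology $\Tau_X$ is generated by the subbase $\{\pr_\alpha^{-1}(V):\alpha\in A,\ V\in\Tau_{X_\alpha}\}$ of preimages of open sets under the projections $\pr_\alpha:X\to X_\alpha$, and that each $\pr_\alpha$ is a continuous open homomorphism of semigroups.

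The key structural observation, which I would isolate as a short remark, is that preimages under $\pr_\alpha$ preserve all the relevant structure. Since $\pr_\alpha$ is a homomorphism, for any subsemigroup $S\subset X_\alpha$ the preimage $\pr_\alpha^{-1}(S)$ is again a subsemigroup of $X$ (if $x,y\in\pr_\alpha^{-1}(S)$ then the $\alpha$-coordinate of $xy$ is $x_\alpha y_\alpha\in S$), and continuity of $\pr_\alpha$ makes $\pr_\alpha^{-1}(S)$ open, respectively closed, whenever $S$ is. Likewise, for a continuous homomorphism $h:X_\alpha\to\II$ the composition $h\circ\pr_\alpha:X\to\II$ is a continuous homomorphism. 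These facts (which are exactly the content of Proposition~\ref{p1}(1)--(2) applied to $\pr_\alpha$) turn subbasic data on each factor into subbasic data on the product.

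With this in hand the three cases run in parallel. For the $\II$-weak case, $\Tau_{X_\alpha}=\Weak_{X_\alpha}$ expresses every open $V\subset X_\alpha$ as a union of finite intersections of sets $h^{-1}(U)$ with $h:X_\alpha\to\II$ continuous and $U\subset\II$ open; pulling back, $\pr_\alpha^{-1}(V)$ becomes the corresponding union of finite intersections of sets $(h\circ\pr_\alpha)^{-1}(U)\in\Weak_X$, so every subbasic $\Tau_X$-open set lies in $\Weak_X$, giving $\Tau_X\subset\Weak_X$. The weak$^\bullet$ case is identical with $h^{-1}(U)$ replaced by complements $X_\alpha\setminus F$ of closed subsemigroups $F$, since $X\setminus\pr_\alpha^{-1}(F)\in\Zar_X$. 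The weak$^\circ$ case needs a pointwise argument because $\Law$ is defined through a base of open subsemigroups rather than a subbase: given $W\in\Tau_X$ and $x\in W$, I would fix a basic box $\bigcap_{i=1}^{n}\pr_{\alpha_i}^{-1}(V_i)\subset W$ containing $x$, use that each $X_{\alpha_i}$ is weak$^\circ$ to choose open subsemigroups $S_i\subset V_i$ with $x_{\alpha_i}\in S_i$, and note that $\bigcap_{i=1}^{n}\pr_{\alpha_i}^{-1}(S_i)$ is an open subsemigroup of $X$ containing $x$ and contained in $W$, witnessing that $x$ is $\Law_X$-interior to $W$.

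I do not expect a serious obstacle here; the proof is a routine transport of base/subbase data across the projections. The only points demanding care are the bookkeeping difference between the base description (weak$^\circ$) and the subbase descriptions (weak$^\bullet$ and $\II$-weak), and the verification that the subsemigroup, closedness, and homomorphism properties are genuinely preserved under $\pr_\alpha^{-1}$ — the latter being where the homomorphism property of the projections is essential.
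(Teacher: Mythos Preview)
Your proof is correct and rests on the same key fact as the paper's, namely that the projections $\pr_\alpha$ carry subbasic data for each weak topology on $X_\alpha$ back to subbasic data for the corresponding weak topology on $X$ (this is precisely Proposition~\ref{p1}(1)). The difference is one of packaging: the paper uses Proposition~\ref{p1}(1) to conclude that each $\pr_\alpha:(X,\Weak_X)\to(X_\alpha,\Weak_{X_\alpha})$ is continuous, whence the identity map $(X,\Weak_X)\to\prod_\alpha(X_\alpha,\Weak_{X_\alpha})=\prod_\alpha(X_\alpha,\Tau_{X_\alpha})=(X,\Tau_X)$ is continuous, giving $\Tau_X\subset\Weak_X$ in one stroke, and then dismisses the other two cases ``by analogy.'' Your argument instead unpacks this at the level of (sub)basic sets, which forces you to treat the weak$^\circ$ case separately because $\Law$ is defined via a base rather than a subbase. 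The paper's formulation is slicker and uniformly handles all three cases without this case distinction; your version is more elementary and makes the base/subbase bookkeeping explicit, which has its own expository value.
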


\begin{proof} Assume that the topologized semigroups $X_\alpha$, $\alpha\in A$, are $\II$-weak. By Proposition~\ref{p1}(1), for every $\alpha\in A$ the continuity of the projection $\pr_\alpha:X\to X_\alpha$ implies the continuity of $\pr_\alpha$ in the $\II$-weak topologies on $X$ and $X_\alpha$. This implies that the identity map
$$(X,\Weak_X)\to\prod_{\alpha\in A}(X_\alpha,\Weak_{X_\alpha})$$ is continuous.
Now the chain of continuous identity maps
$$(X,\Weak_X)\to\prod_{\alpha\in A}(X_\alpha,\Weak_{X_\alpha})=\prod_{\alpha\in A}(X_\alpha,\Tau_{X_\alpha})=(X,\Tau_X)\to (X,\Weak_X)$$ensures that $\Tau_X=\Weak_X$, which means that the topologized semigroup $X$ is $\II$-weak.

By analogy we can prove that the topologized semigroup $X$ is weak$^\circ$ (or weak$^\bullet$) if so are the topologized semigroups $X_\alpha$, $\alpha\in A$.
\end{proof}

Propositions~\ref{p:sub} and \ref{p:product} imply the following theorem.

\begin{theorem} Any subsemigroup of the Tychonoff product of linear topologized semigroups is both weak$^\circ$ and weak$^\bullet$.
\end{theorem}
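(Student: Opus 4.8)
The plan is to obtain the theorem as an immediate consequence of the two Propositions~\ref{p:sub} and \ref{p:product} cited just above it, together with the earlier observation that every linear topologized semigroup is both weak$^\circ$ and weak$^\bullet$. No new construction is needed; the argument is a short chaining of these three established facts, applied separately to the weak$^\circ$ and the weak$^\bullet$ topologies.

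First I would fix the starting point. In a Tychonoff product $Y=\prod_{\alpha\in A}X_\alpha$ of linear topologized semigroups, each factor $X_\alpha$ is, by the Proposition asserting that every linear topologized semigroup is both weak$^\circ$ and weak$^\bullet$, such that $\Tau_{X_\alpha}=\Law_{X_\alpha}$ and $\Tau_{X_\alpha}=\Zar_{X_\alpha}$. Applying Proposition~\ref{p:product} in its weak$^\circ$ form and then in its weak$^\bullet$ form yields that the product $Y$ is itself both weak$^\circ$ and weak$^\bullet$, i.e. $\Tau_Y=\Law_Y=\Zar_Y$.

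Next I would pass to an arbitrary subsemigroup $X\subset Y$. Since $Y$ is weak$^\circ$, Proposition~\ref{p:sub} (in its weak$^\circ$ form) guarantees that $X$ inherits this property; since $Y$ is also weak$^\bullet$, the weak$^\bullet$ form of Proposition~\ref{p:sub} shows $X$ is weak$^\bullet$ as well. Thus $X$ is both weak$^\circ$ and weak$^\bullet$, which is precisely the assertion of the theorem.

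The substantive work has therefore already been carried out in establishing the linearity proposition and Propositions~\ref{p:sub} and \ref{p:product}, so there is no genuine obstacle in this final deduction. The only point requiring mild care is bookkeeping: each of Propositions~\ref{p:sub} and \ref{p:product} is stated uniformly for all three weak topologies, so one must invoke the correct weak$^\circ$ and weak$^\bullet$ instances and keep the two chains of implications separate rather than conflating them.
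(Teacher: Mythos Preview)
Your proposal is correct and follows exactly the paper's own route: the paper merely states that the theorem is implied by Propositions~\ref{p:sub} and~\ref{p:product} (together with the earlier observation that every linear topologized semigroup is weak$^\circ$ and weak$^\bullet$), and you have simply spelled out this two-step chaining explicitly.
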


The following lemma gives a condition under which a topologized semigroup is not weak$^\bullet$.

\begin{lemma}\label{lemma 3} A topologized semigroup $X$ is not weak$^\bullet$ if there exists an open set $U\subset X$ and a point $x\in U$ such that for every open neighborhood $O_x\subset U$ of $x$ there exists an infinite set $I\subset X\setminus O_x$ such that $ab\in O_x$ for any distinct points $a,b\in I$.
\end{lemma}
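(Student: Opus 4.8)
The plan is to prove the contrapositive in disguise: under the stated hypotheses I will exhibit a point that lies in the closure of a set with respect to $\Zar_X$ but is separated from that set by the original topology $\Tau_X$, so that $\Zar_X \subsetneq \Tau_X$ and hence $X$ is not weak$^\bullet$. Concretely, I fix the open set $U \subset X$ and the point $x \in U$ supplied by the hypothesis, and I consider the $\Tau_X$-open set $U$ together with its complement. The goal is to show that $x$ belongs to the $\Zar_X$-closure of $X \setminus U$, i.e. that every $\Zar_X$-open neighborhood of $x$ meets $X \setminus U$. Since $U$ is $\Tau_X$-open and $x \in U$, this will directly witness that $U \notin \Zar_X$, because otherwise $U$ would be a $\Zar_X$-open neighborhood of $x$ disjoint from $X \setminus U$.

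The key computation is to analyze a basic $\Zar_X$-neighborhood of $x$, which has the form $W = X \setminus (F_1 \cup \dots \cup F_n)$ for closed subsemigroups $F_1,\dots,F_n \subset X$ with $x \in W$. I would first intersect with $U$ to get an open neighborhood $O_x := W \cap U \subset U$ of $x$, and apply the hypothesis to $O_x$ to obtain an infinite set $I \subset X \setminus O_x$ with $ab \in O_x$ for all distinct $a,b \in I$. The heart of the argument is a pigeonhole step: since $I$ is infinite and there are only finitely many sets $F_1,\dots,F_n$, and since each $F_j$ is a subsemigroup, I want to produce a point of $I$ lying outside $U$ (equivalently, in $X \setminus U \subset X\setminus O_x$ that is not removed by passing from $O_x$ back to $W$), thereby showing $W$ meets $X\setminus U$.

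The crucial subsemigroup observation is the following. Suppose toward a contradiction that $I \subset U$; then every element of $I$ lies in $U \setminus O_x = U \cap (F_1 \cup \dots \cup F_n)$, so each point of $I$ lands in some $F_j$. By pigeonhole an infinite subset $I' \subset I$ lies inside a single $F_j$. Because $F_j$ is a subsemigroup, for any two distinct $a,b \in I'$ we have $ab \in F_j$; but the hypothesis guarantees $ab \in O_x = W \cap U \subset W = X \setminus (F_1\cup\dots\cup F_n)$, so $ab \notin F_j$. This contradiction shows that $I \not\subset U$, hence some point $c \in I$ satisfies $c \in X \setminus U$. Since $c \in I \subset X \setminus O_x$ and $c \notin U$, and since $W \setminus U = W \cap (X\setminus U)$, I still need $c \in W$; this is where I must be careful, so I would instead argue that $c \in X\setminus U$ together with $W \supset O_x$ gives $W \cap (X\setminus U) \neq \emptyset$ by choosing the neighborhood bookkeeping so that the produced point genuinely lies in $W$.

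The main obstacle I anticipate is exactly this bookkeeping: the hypothesis places $I$ outside $O_x = W \cap U$, which only guarantees each $a \in I$ is outside $U$ \emph{or} outside $W$, and I must rule out the possibility that the infinitely many points of $I$ all escape through $W$ (i.e. land in the $F_j$) rather than through $X \setminus U$. The subsemigroup/pigeonhole argument above is designed precisely to handle this: if infinitely many points of $I$ lie in $\bigcup_j F_j$, their pairwise products would have to lie both in some fixed $F_j$ and in $O_x \subset W$, which is impossible. Thus all but finitely many points of $I$ must escape through $X \setminus U$, giving a point of $I \cap (X\setminus U) \subset W$ (since such a point is not in any $F_j$), and completing the proof that $W \cap (X\setminus U) \neq \emptyset$. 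Ensuring the products avoid \emph{all} of $F_1,\dots,F_n$ simultaneously, not just one, is the delicate point, handled by noting $ab \in O_x \subset W = X\setminus\bigcup_j F_j$.
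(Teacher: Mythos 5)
Your proposal is correct and uses the same key mechanism as the paper's proof: pigeonhole the infinite set $I$ into the finitely many closed subsemigroups $F_1,\dots,F_n$, and note that two distinct points $a,b\in I$ lying in a common $F_j$ would force $ab\in F_j$, while the hypothesis puts $ab\in O_x\subset X\setminus(F_1\cup\dots\cup F_n)$. The only difference is bookkeeping: the paper assumes $\Zar_X=\Tau_X$ and applies the hypothesis directly to a basic $\Zar_X$-neighborhood $O_x=X\setminus(F_1\cup\dots\cup F_n)\subset U$ of $x$, so that $X\setminus O_x$ equals $F_1\cup\dots\cup F_n$ exactly and your case analysis (points of $I$ escaping through $X\setminus U$ versus through the $F_j$'s) becomes unnecessary, whereas your route establishes the marginally stronger conclusion that $U$ itself fails to be $\Zar_X$-open.
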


\begin{proof} Assuming that $X$ is weak$^\bullet$, we can find closed subsemigroups $F_1,\dots,F_n\subset X$ such that $x\in X\setminus(F_1\cup\dots\cup F_n)\subset U$.
By our assumption, for the open neighborhood $O_x:=X\setminus(F_1\cup\dots\cup F_n)$ of $x$ there exists an infinite set $I\subset X\setminus O_x$ such that $ab\in O_x$ for any distinct points $a,b\in I$. Since $I\subset X\setminus O_x=F_1\cup\dots\cup F_n$, for some $i\le n$ the set $I\cap F_n$ is infinite. Choose any distinct points $a,b\in I\cap F_i$ and conclude that $ab\in F_i\subset X\setminus O_x$, which contradicts the choice of $I$.
\end{proof}

Now we construct a weak$^\circ$ topological semilattice which is not weak$^\bullet$.

\begin{example}\label{ex1} Let $X$ be an infinite discrete space and $0\in X$ be any point of $X$. Endow $X$ with the semilattice operation
$$xy=\begin{cases}x&\mbox{if $x=y$},\\
0&\mbox{otherwise}.
\end{cases}
$$
It is clear that the discrete topological semilattice $X$ satisfies the condition of Lemma~\ref{lemma 3} and hence is not weak$^\bullet$. On the other hand, $X$ is weak$^\circ$, being a discrete topological semilattice. 
\end{example}

An example of a weak$^\bullet$ non-weak$^\circ$ topological semilattice is more complicated.

\begin{example}\label{ex2} There exists a countable topological semilattice $X$ which is metrizable and weak$^\bullet$ but not weak$^\circ$.
\end{example}

\begin{proof} Let $X$ be the set of all sequences $(x_n)_{n\in\w}$ of non-negative rational numbers such that $x_n=0$ for all but finitely many numbers. Endow $X$ with the semilattice operation of coordinatewise maximum. Consider the function $\Sigma:X\to\IR$, $\Sigma:(x_n)_{n\in\w}\mapsto \sum_{n\in\w}x_n$. Let $\tau$ be the smallest topology on $X$ such that for every $k\in\w$ the coordinate projection $\pr_k:X\to\IR$, $\pr_k:(x_n)_{n\in\w}\mapsto x_k$, is continuous.

Let $\Tau_X$ be the topology on $X$ generated by the base $\{U\cap\Sigma^{-1}([0,a)):U\in\tau,\;a\in\IR\}$. It can be shown that the topology $\Tau_X$ is regular and has a countable base. By the Urysohn-Tychonoff Metrizability Theorem~\cite[4.2.9]{Engelking1989}, the topological space $(X,\Tau_X)$ is metrizable.

One can check that $(X,\Tau_X)$ is a topological semilattice with respect to the semilattice operation of maximum. Moreover, this topological semilattice is weak$^\bullet$ but not weak$^\circ$.
\end{proof}

\begin{problem}\label{prob:prod} Let $X,Y$ be Hausdorff semitopological semilattices. Are the identity maps  $$
\begin{aligned}
&(X\times Y,\Law_{X\times Y})\to(X,\Law_X)\times (Y,\Law_Y),\\
&(X\times Y,\Zar_{X\times Y})\to(X,\Zar_X)\times (Y,\Zar_Y),\\
&(X\times Y,\Weak_{X\times Y})\to(X,\Weak_X)\times (Y,\Weak_Y)
\end{aligned}$$homeomorphisms?
\end{problem}

In Proposition~\ref{p:prod2} we shall give a partial affirmative answer to this problem for $\II$-separated complete semitopological semilattices.

\section{$\Zar$-compact topologized semigroups}

In this section we study topologized semigroups whose weak$^\bullet$ topology is compact. Such topologized semigroups are called {\em $\Zar$-compact}.

First, we present a simple characterization of $\Zar$-compact topologized semigroups and derive from it a more complicated (and useful) characterization of $\Zar$-compact semitopological semilattices.

A non-empty family $\F$ of subsets of a set $X$ is called {\em centered} if each non-empty finite subfamily $\mathcal E\subset\F$ has non-empty intersection $\bigcap\mathcal E$. The classical Alexander's subbase Theorem \cite[3.12.2]{Engelking1989} says that a topological space $X$ is compact if and only if $X$ has a subbase $\mathcal B$ of the topology such that any centered subfamily $\F\subset\{X\setminus B:B\in\mathcal B\}$ has non-empty intersection. This Alexander's Theorem implies the following characterization.

\begin{theorem}\label{t:Cc1} A topologized semigroup $X$ is $\Zar$-compact if and only if each centered family of closed subsemigroups of $X$ has non-empty intersection.
\end{theorem}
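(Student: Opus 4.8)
The plan is to invoke Alexander's subbase Theorem directly, using the natural subbase coming from the definition of the weak$^\bullet$ topology. By definition, $\Zar_X$ is generated by the subbase $\mathcal{B}=\{X\setminus S: S\text{ is a closed subsemigroup of }X\}$. The complements of these subbasic open sets are exactly the closed subsemigroups of $X$, so the family $\{X\setminus B: B\in\mathcal B\}$ is precisely the collection of all closed subsemigroups of $X$. Thus the statement to be proved is simply the specialization of Alexander's Theorem to this particular subbase.

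First I would verify the ``only if'' direction, which requires essentially no semigroup structure: if $(X,\Zar_X)$ is compact and $\F$ is a centered family of closed subsemigroups, then $\F$ consists of $\Zar_X$-closed sets with the finite intersection property, so by compactness $\bigcap\F\neq\emptyset$. For the ``if'' direction I would apply Alexander's subbase Theorem with $\mathcal B$ as above: to conclude that $(X,\Zar_X)$ is compact it suffices to check that every centered subfamily $\F\subset\{X\setminus B:B\in\mathcal B\}$ has non-empty intersection. Since $\{X\setminus B:B\in\mathcal B\}$ equals the family of all closed subsemigroups of $X$, this is exactly the hypothesis that every centered family of closed subsemigroups has non-empty intersection.

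The only genuinely substantive point is making sure the subbase is correctly identified and that Alexander's Theorem applies to it verbatim. One small caveat I would address is the degenerate case: $\mathcal B$ should generate $\Zar_X$ as a subbase in the sense that finite intersections of members of $\mathcal B$ form a base; since $X$ itself is a closed subsemigroup, $\emptyset=X\setminus X\in\mathcal B$, and $X=X\setminus\emptyset$ arises as the empty intersection, so $\mathcal B$ covers the trivial cases needed for a subbase. With this understood, the equivalence is immediate and the proof is essentially a two-line translation of Alexander's Theorem into the language of closed subsemigroups.

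I do not expect any serious obstacle here; the content of the theorem lies entirely in Alexander's subbase Theorem, which is quoted from \cite[3.12.2]{Engelking1989}, and the role of this proposition is to repackage that topological fact in a form convenient for the later analysis of $\Zar$-compact semitopological semilattices. The main thing to be careful about is the direction of the duality between subbasic open sets and closed subsemigroups, and ensuring the phrase ``centered family of closed subsemigroups'' matches ``centered subfamily of $\{X\setminus B:B\in\mathcal B\}$'' exactly.
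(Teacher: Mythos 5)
Your proof is correct and takes exactly the paper's approach: the paper states Alexander's subbase Theorem in the same centered-family form and derives the theorem immediately from it, using precisely the subbase of complements of closed subsemigroups. Your write-up merely makes explicit the duality bookkeeping that the paper leaves implicit.
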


Now we shall detect topologized semilattices whose weak$^\bullet$ topology is compact. First we recall some definitions related to topologized posets.

By a {\em topologized poset} we understand a poset endowed with a topology.
A topologized poset $X$ is called
\begin{itemize}
\item {\em complete} if each non-empty chain $C\subset X$ has $\inf C\in\bar C$ and $\sup C\in\bar C$;
\item {\em chain-compact} if each closed chain in $X$ is compact;
\item {\em ${\uparrow}{\downarrow}$-closed} if for any point $x\in X$ the sets ${\uparrow}x$ and ${\downarrow}x$ are closed in $X$.
\end{itemize}

\begin{lemma}\label{l:cl-chain} Let $X$ be a ${\uparrow}{\downarrow}$-closed topologized poset. The closure $\bar C$ of any chain $C$ in $X$ is a chain.
\end{lemma}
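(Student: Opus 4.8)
The plan is to exploit the closedness of the principal up- and down-sets to propagate the chain condition from $C$ to its closure in two stages. The chain hypothesis says precisely that $c\in{\uparrow}d\cup{\downarrow}d$ for all $c,d\in C$, so the idea is to replace first one, and then both, of the points $c,d$ by an arbitrary element of $\bar C$, using that the relevant up/down-sets are closed.

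First I would fix a point $y\in C$ and note that, since $C$ is a chain, $C\subseteq{\uparrow}y\cup{\downarrow}y$. Because $X$ is ${\uparrow}{\downarrow}$-closed, both ${\uparrow}y$ and ${\downarrow}y$ are closed, hence so is their union; passing to closures gives $\bar C\subseteq{\uparrow}y\cup{\downarrow}y$. In other words, every point $x\in\bar C$ is comparable with every point $y\in C$.

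For the second stage, fix an arbitrary $x\in\bar C$. The comparability just established, read as the statement $y\in{\uparrow}x\cup{\downarrow}x$ for every $y\in C$, shows $C\subseteq{\uparrow}x\cup{\downarrow}x$. Invoking ${\uparrow}{\downarrow}$-closedness once more, the set ${\uparrow}x\cup{\downarrow}x$ is closed and contains $C$, so it contains $\bar C$ as well. Thus every $y\in\bar C$ is comparable with $x$, and since $x\in\bar C$ was arbitrary, any two points of $\bar C$ are comparable, i.e.\ $\bar C$ is a chain.

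The argument is routine once this bootstrap is set up, and I do not expect any single step to be a genuine obstacle. The only place that needs care is the bookkeeping translating the relation $x\le y$ into the membership statements $x\in{\downarrow}y$ and $y\in{\uparrow}x$ (together with their symmetric counterparts), so that the one closedness hypothesis can be applied cleanly with the roles of the fixed point and the varying point interchanged between the two stages. The substance of the lemma is exactly that ${\uparrow}{\downarrow}$-closedness permits taking closures inside a union of the form ${\uparrow}(\cdot)\cup{\downarrow}(\cdot)$, applied twice.
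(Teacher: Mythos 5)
Your proof is correct. It reaches the same conclusion as the paper but is organized differently: the paper argues by contradiction, taking incomparable points $x,y\in\bar C$, using the ${\uparrow}{\downarrow}$-closedness to see that $X\setminus({\uparrow}y\cup{\downarrow}y)$ is an open neighborhood of $x$, extracting from it a point $z\in C$, and then deriving a contradiction from $C\subset{\uparrow}z\cup{\downarrow}z$ and hence $\bar C\subset{\uparrow}z\cup{\downarrow}z$. Your argument is direct: you apply the single observation ``$C$ contained in a closed set implies $\bar C$ contained in it'' twice, first with the closed set ${\uparrow}y\cup{\downarrow}y$ for $y\in C$, then, after flipping the comparability relation, with ${\uparrow}x\cup{\downarrow}x$ for $x\in\bar C$. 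The two proofs share the same engine (closedness of ${\uparrow}z\cup{\downarrow}z$ lets comparability pass to closures), but yours dispenses with the contradiction and with the neighborhood characterization of closure points, using only that the closure is the smallest closed superset; this makes it marginally cleaner and valid verbatim in any topologized poset, while the paper's version is more compressed, doing the job in a single step at the cost of a proof by contradiction.
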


\begin{proof} Assuming that $\bar C$ is not a chain, we can find two points $x,y\in \bar C$ such that $x\notin{\updownarrow}y:=({\uparrow}y)\cup({\downarrow}y)$. Since $X$ is ${\uparrow}{\downarrow}$-closed, the set $X\setminus{\updownarrow}y$ is an open neighborhood of $x$. Since $x\in \bar C$, there exists a point $z\in C\setminus{\updownarrow}y$. Taking into account that $C$ is a chain, we conclude that $C\subset{\updownarrow}z$ and hence $y\in \bar C\subset \overline{{\updownarrow}z}={\updownarrow}z$, which contradicts $z\notin{\updownarrow}y$.
\end{proof}

Complete topologized posets are studied in detail in \cite{BBo}, where it is proved that completeness of topologized posets can be equivalently defined using up-directed and down-directed sets instead of chains.

A subset $D$ of a poset $X$ is called {\em up-directed} (resp. {\em down-directed}) if for any $x,y\in D$ there exists $z\in D$ such that $x\le z$ and $y\le z$ (resp. $z\le x$ and $z\le y$).

The following characterization was proved in \cite{BBo}. It is a topological version of a known characterization of complete posets due to Iwamura \cite{Iwamura} (see also \cite{Bruns} and \cite{Mark}).

\begin{lemma}\label{l:Iwa} A topologized poset $X$ is complete if and only if each non-empty up-directed set $U\subset X$ has $\sup U\in\bar U$ and each non-empty down-directed set $D\subset X$ has $\inf D\in\bar D$ in $X$.
\end{lemma}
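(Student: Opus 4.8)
The plan is to prove only the non-trivial implication, since the reverse one is immediate: every chain is simultaneously up-directed and down-directed, so if each up-directed (resp. down-directed) set has its supremum (resp. infimum) in its closure, then in particular each chain does, which is precisely completeness. By the order-reversing symmetry $\le\leftrightarrow\ge$ it then suffices to show that completeness implies that every non-empty up-directed set $U\subset X$ has $\sup U\in\bar U$; the dual statement for down-directed sets and infima follows by applying this to the opposite order.

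I would reduce the claim to a combinatorial decomposition of directed sets and argue by transfinite induction on the cardinality $\kappa=|U|$. In the base case $U$ is finite, and directedness yields a greatest element $m\in U$ (obtained by successively replacing two elements by an upper bound of theirs lying in $U$), so that $\sup U=m\in U\subseteq\bar U$. For the inductive step I would invoke Iwamura's Lemma \cite{Iwamura} (see also \cite{Bruns, Mark}): every infinite up-directed set $U$ is the union $U=\bigcup_{\alpha<\lambda}U_\alpha$ of an increasing transfinite chain $(U_\alpha)_{\alpha<\lambda}$ of up-directed subsets with $|U_\alpha|<|U|$. By the inductive hypothesis each $U_\alpha$ possesses a supremum $s_\alpha:=\sup U_\alpha\in\bar{U_\alpha}\subseteq\bar U$.

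The key observation is that these suprema assemble into a chain: for $\alpha<\beta$ the inclusion $U_\alpha\subseteq U_\beta$ forces $s_\alpha\le s_\beta$, so $C:=\{s_\alpha:\alpha<\lambda\}$ is a non-empty chain contained in $\bar U$. The sets $U$ and $C$ have exactly the same upper bounds (an upper bound of $U$ bounds each $U_\alpha$ and hence each $s_\alpha$; conversely a bound of every $s_\alpha$ bounds each $u\in U$, since $u\in U_\alpha$ and $u\le s_\alpha$ for some $\alpha$), so the least upper bound of $C$ is the least upper bound of $U$. Completeness applied to the chain $C$ then gives both the existence of $\sup C=\sup U$ and the membership $\sup C\in\bar C$; since $C\subseteq\bar U$ and $\bar U$ is closed, $\bar C\subseteq\bar U$, whence $\sup U=\sup C\in\bar C\subseteq\bar U$, closing the induction.

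The main obstacle is Iwamura's Lemma itself, and in particular its validity when $\kappa=|U|$ is a \emph{singular} cardinal: for regular $\kappa$ the decomposition is produced directly by exhausting $U$ along an increasing $\kappa$-chain of subsets of strictly smaller size (as in the countable case, where a finite subset is saturated to a finite directed one by adjoining a single upper bound), but for singular $\kappa$ one cannot keep the pieces below $\kappa$ by such a naive exhaustion, and a more delicate transfinite construction is needed. I would treat this decomposition as a black box, citing \cite{Iwamura, Bruns, Mark}; once it is granted, the passage from the purely order-theoretic fact ``chain-complete $\Leftrightarrow$ directed-complete'' to the closure-sensitive form stated in the lemma costs nothing beyond the elementary remark $s_\alpha\in\bar{U_\alpha}\subseteq\bar U$ exploited above.
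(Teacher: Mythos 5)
Your proof is correct and is essentially the paper's own argument (the one proved in \cite{BBo}): transfinite induction on the cardinality of the up-directed set, a decomposition of it into an increasing transfinite chain of smaller up-directed subsets, the observation that the suprema of these pieces form a chain inside $\bar U$ having the same upper bounds as $U$, and a final appeal to chain-completeness, with the down-directed case handled by passing to the opposite order. The only difference is that the paper constructs the Iwamura decomposition explicitly---as the hulls $\langle\{x_\alpha\}_{\alpha\le\beta}\rangle$ of initial segments of an enumeration of $U$ under a fixed upper-bound selection function $f:U\times U\to U$, a construction that works uniformly for regular and singular cardinals---whereas you import it as a black box from \cite{Iwamura,Bruns,Mark}, so your worry that the singular case needs a more delicate construction is unnecessary.
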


A topologized semilattice $X$ is called {\em complete} if it is complete as a topologized poset endowed with the natural partial order $\le$ (defined by $x\le y$ iff $xy=x$).

\begin{lemma}\label{l:c=>Zar} Each complete topologized semilattice is $\Zar$-compact.
\end{lemma}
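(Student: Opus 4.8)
The plan is to invoke the Alexander-type characterization of Theorem~\ref{t:Cc1}: to prove $\Zar$-compactness of a complete topologized semilattice $X$ it suffices to show that every centered family $\F$ of closed subsemilattices of $X$ has non-empty intersection. First I would replace $\F$ by the family of all finite intersections of its members. Each such intersection is again a closed subsemilattice, and is non-empty by centeredness; this replacement leaves $\bigcap\F$ unchanged and makes $(\F,\subseteq)$ downward directed, i.e.\ a filter base of non-empty closed subsemilattices.

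The key observation I would isolate is that each $F\in\F$ possesses a least element. Being a subsemilattice, $F$ is down-directed, since for $a,b\in F$ the product $ab=\inf\{a,b\}$ lies in $F$ below both; hence completeness together with Lemma~\ref{l:Iwa} gives $\inf F\in\bar F=F$, so that $m_F:=\inf F$ is the minimum of $F$. Because a smaller subsemilattice has a larger infimum, the assignment $F\mapsto m_F$ reverses inclusion, and as $\F$ is downward directed this forces $U:=\{m_F:F\in\F\}$ to be an up-directed subset of $X$: given $F_1,F_2$, the element $m_{F_1\cap F_2}$ dominates both $m_{F_1}$ and $m_{F_2}$. Applying the up-directed half of Lemma~\ref{l:Iwa} once more yields a point $s:=\sup U\in\bar U$.

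The main obstacle will be to check that this single point $s$ lies in \emph{every} member of $\F$ at once, rather than merely in the closure of the whole set $U$. To handle a fixed $F_0\in\F$, I would pass to the subfamily $U_0:=\{m_F:F\in\F,\ F\subseteq F_0\}$. For an arbitrary $F\in\F$ the intersection $F\cap F_0$ belongs to $\F$, is contained in $F_0$, and satisfies $m_{F\cap F_0}\ge m_F$; thus $U_0$ is an up-directed set cofinal in $U$, whence $\sup U_0=\sup U=s$. At the same time $m_F\in F\subseteq F_0$ for every index of $U_0$, so $U_0\subseteq F_0$, and since $F_0$ is closed, completeness (Lemma~\ref{l:Iwa}) places $s=\sup U_0$ in $\overline{U_0}\subseteq F_0$. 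As $F_0\in\F$ was arbitrary, $s\in\bigcap\F\ne\emptyset$, and Theorem~\ref{t:Cc1} delivers the $\Zar$-compactness of $X$. The only delicate step is the cofinality bookkeeping in this last paragraph, which ensures that the globally defined supremum $s$ is simultaneously recovered as the supremum of a net lying entirely inside each prescribed closed subsemilattice $F_0$.
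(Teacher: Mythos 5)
Your proof is correct, but it follows a genuinely different route from the paper's. The paper proves Lemma~\ref{l:c=>Zar} by transfinite induction on the cardinality of the centered family $\F$: it extracts the smallest element $x_\alpha$ of each partial intersection $F_{\le\alpha}=\bigcap_{\beta\le\alpha}F_\beta$ as the infimum of a \emph{maximal chain} (whose maximality forces that infimum to be a global minimum), and then takes the supremum of the resulting non-decreasing transfinite sequence; it thus relies only on the chain form of completeness plus Zorn's lemma. You instead avoid induction altogether by invoking Lemma~\ref{l:Iwa}: a closed subsemilattice $F$ is down-directed (since $ab\le a$ and $ab\le b$), so completeness immediately yields a minimum $m_F\in F$; closing $\F$ under finite intersections makes $\{m_F:F\in\F\}$ up-directed, and your cofinality argument correctly shows that $s=\sup\{m_F:F\in\F\}$ is recaptured as $\sup U_0$ with $U_0\subseteq F_0$ for each fixed $F_0$, hence $s\in\bigcap\F$ by closedness of $F_0$. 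All steps check out, including the two delicate ones: the enlarged family remains centered (its members are exactly the finite intersections, non-empty by centeredness of the original family), and cofinality of $U_0$ in $U$ does give equality of the two suprema since both exist. What each approach buys: yours is shorter and structurally cleaner, but it outsources the transfinite work to Lemma~\ref{l:Iwa} (the Iwamura-type characterization, proved in the reference [BBo] by its own inductive argument); the paper's proof is self-contained modulo Zorn's lemma and never needs the directed-set characterization. It is worth noting that your technique --- closed subsemilattice plus completeness plus Lemma~\ref{l:Iwa} yields a smallest element --- is exactly the one the paper itself uses later (e.g.\ in Lemma~\ref{l:Lw}), so your argument fits naturally into the paper's toolkit.
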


\begin{proof} By Theorem~\ref{t:Cc1}, to show that $X$ is $\Zar$-compact, it suffices to prove that each centered family $\F$ of closed subsemilattices in $X$ has non-empty intersection.

This will be proved by induction on the cardinality $|\F|$ of the family $\F$. If $\F$ is finite, then $\bigcap\F\ne\emptyset$ as $\F$ is centered. Assume that for some infinite cardinal $\kappa$ we have proved that each centered family $\F$ consisting of $|\F|<\kappa$ many closed subsemilattices of $X$ has non-empty intersection.

Take any centered family $\F=\{F_\alpha\}_{\alpha\in\kappa}$ of closed subsemilattices of $X$. By the inductive assumption, for every $\alpha<\kappa$ the closed subsemilattice $F_{\le\alpha}=\bigcap_{\beta\le\alpha}F_\beta$ is not empty. Let $M_\alpha$ be a maximal chain in $F_{\le\alpha}$. By the completeness of $X$, the chain $M_\alpha$ has $\inf M_\alpha\in\bar M_\alpha\subset \bar F_{\le\alpha}=F_{\le\alpha}$. We claim that the element $x_\alpha:=\inf M_\alpha$ is the smallest element of the semilattice $F_{\le\alpha}$. In the opposite case, we can find an element $z\in F_{\le\alpha}$ such that $x_\alpha\not\le z$ and conclude that $zx_\alpha<x_\alpha$ and $\{zx_\alpha\}\cup M_\alpha$ is a chain in $F_{\le \alpha}$ that properly contains the maximal chain $M_\alpha$, which contradicts the maximality of $M_\alpha$. This contradiction shows that $x_\alpha$ is the smallest element of the semilattice $F_{\le\alpha}$.

Observe that for any ordinals $\alpha\le\beta<\kappa$ the inclusion $x_\beta\in F_{\le\beta}\subset F_{\le\alpha}$ implies $x_\alpha\le x_\beta$. So, for every $\alpha\in\kappa$ the set $C_\alpha:=\{x_\beta\}_{\alpha\le\beta<\kappa}$ is a chain in $X$. By the completeness of $X$, the chain $C_\alpha$ has $\sup C_\alpha\in\bar C_\alpha\subset F_{\le\alpha}$. Since the transfinite sequence $(x_\alpha)_{\alpha\in\kappa}$ is non-decreasing, $\sup C_\alpha=\sup C_0$ for all $\alpha\in\kappa$. Then $\sup C_0\in\bigcap_{\alpha\in\kappa}F_{\le\alpha}=\bigcap_{\alpha\in\kappa}F_\alpha$, which means that the family $\F=\{F_\alpha\}_{\alpha\in\kappa}$ has non-empty intersection.
\end{proof}

\begin{lemma}\label{l:Zar=>cc} Each $\Zar$-compact topologized semilattice $X$ is chain-compact.
\end{lemma}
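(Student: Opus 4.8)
The plan is to verify compactness of a closed chain through the finite intersection property and then to recognize that, on a chain, this property is exactly what Theorem~\ref{t:Cc1} controls. Recall that a topological space is compact if and only if every centered family of its closed subsets has non-empty intersection. So I would fix a closed chain $C\subset X$ together with an arbitrary centered family $\mathcal A$ of relatively closed subsets of $C$, and try to prove that $\bigcap\mathcal A\ne\emptyset$; establishing this for every such $\mathcal A$ shows $C$ compact, i.e.\ $X$ chain-compact.

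The whole argument rests on two structural observations. First, since $C$ is closed in $X$, any subset of $C$ that is closed in the subspace topology is the intersection of $C$ with a $\Tau_X$-closed set and hence is closed in $X$ itself; thus every member of $\mathcal A$ is $\Tau_X$-closed. Second, every subset of a chain is a subsemilattice: for any two points of $C$ one has $x\le y$ or $y\le x$, and then $xy=x$ or $xy=y$, so $xy\in\{x,y\}$; in particular each $A\in\mathcal A$ is closed under the semilattice operation. Combining the two observations, each $A\in\mathcal A$ is a closed subsemilattice (equivalently, a closed subsemigroup) of $X$.

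Consequently $\mathcal A$ is a centered family of closed subsemigroups of $X$, and the $\Zar$-compactness of $X$ together with Theorem~\ref{t:Cc1} gives $\bigcap\mathcal A\ne\emptyset$ at once. Since $\mathcal A$ was an arbitrary centered family of relatively closed subsets of $C$, the chain $C$ is compact, which is the required conclusion (the empty chain being trivially compact).

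I do not anticipate a genuine obstacle: all the content sits in the two elementary remarks of the second paragraph, after which the conclusion is immediate from Theorem~\ref{t:Cc1}. The only place demanding a little care is the passage ``relatively closed in $C\Rightarrow$ closed in $X$'', which genuinely uses that $C$ is closed in $X$ --- this is precisely why chain-compactness is defined by requiring compactness only of closed chains, and it is this hypothesis that lets the members of $\mathcal A$ serve as closed subsemilattices of the ambient space.
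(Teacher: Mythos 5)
Your proof is correct and follows exactly the paper's argument: reduce compactness of a closed chain $C$ to the finite intersection property, observe that closed subsets of $C$ are closed subsemilattices of $X$ (closedness of $C$ plus the fact that any subset of a chain is a subsemilattice), and invoke Theorem~\ref{t:Cc1}. The paper compresses your two ``structural observations'' into the single sentence ``Since $C$ is a chain, each set $F\in\F$ is a closed subsemilattice of $X$,'' but the content is identical.
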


\begin{proof} We should prove that any closed chain $C$ in $X$ is compact.
This will follow as soon as we show that each centered family $\F$ of closed subsets of $C$ has non-empty intersection. Since $C$ is a chain, each set $F\in\F$ is a closed subsemilattice of $X$. By Theorem~\ref{t:Cc1}, the $\Zar$-compactness of the topologized semilattice $X$ implies that $\bigcap\F\ne\emptyset$.
\end{proof}

 Lemmas~\ref{l:c=>Zar} and \ref{l:Zar=>cc} will be used to prove the following characterization of $\Zar$-compactness.

\begin{theorem}\label{t:Cc} For an ${\uparrow}{\downarrow}$-closed topologized semilattice $X$ the following conditions are equivalent:
\begin{enumerate}
\item $X$ is complete;
\item $X$ is $\Zar$-compact;
\item $X$ is chain-compact.
\end{enumerate}
\end{theorem}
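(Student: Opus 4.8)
The plan is to establish the cycle of implications $(1)\Ra(2)\Ra(3)\Ra(1)$, of which the first two are already available. Indeed, Lemma~\ref{l:c=>Zar} gives $(1)\Ra(2)$ and Lemma~\ref{l:Zar=>cc} gives $(2)\Ra(3)$, so the only remaining work is to prove $(3)\Ra(1)$: that chain-compactness of the ${\uparrow}{\downarrow}$-closed semilattice $X$ forces its completeness. Note that the hypothesis ${\uparrow}{\downarrow}$-closedness has not yet been used in the preceding two lemmas, so it is natural to expect it to carry the entire burden of this last implication.

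To prove $(3)\Ra(1)$, I would fix a non-empty chain $C\subset X$ and pass to its closure $\bar C$. By Lemma~\ref{l:cl-chain}, $\bar C$ is again a chain (this is precisely where ${\uparrow}{\downarrow}$-closedness enters), and being a closed chain it is compact by chain-compactness. Thus the task reduces to showing that the compact chain $K:=\bar C$ has a smallest and a largest element, and that these coincide with $\inf C$ and $\sup C$ respectively.

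To extract $\min K$, I would consider the family $\{K\cap{\downarrow}x:x\in K\}$ of non-empty closed subsets of $K$ (closed because $X$ is ${\uparrow}{\downarrow}$-closed). This family is centered: any finite subfamily is indexed by finitely many points of the chain $K$, and their least member lies in every member of the subfamily. Compactness of $K$ then produces a point $m\in\bigcap_{x\in K}{\downarrow}x$, that is, a smallest element $m=\min K$; the symmetric argument with the closed upper sets ${\uparrow}x$ yields a largest element $M=\max K$. It then remains to identify $m$ with $\inf C$ and $M$ with $\sup C$. Clearly $m$ is a lower bound of $C\subset K$; conversely, if $b$ is any lower bound of $C$, then $C\subset{\uparrow}b$, and since ${\uparrow}b$ is closed we get $\bar C\subset{\uparrow}b$, whence $m\in\bar C$ gives $b\le m$. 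Therefore $m=\inf C\in\bar C$, and the dual argument using ${\downarrow}b$ shows $M=\sup C\in\bar C$, so $X$ is complete.

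The main obstacle is the implication $(3)\Ra(1)$, and within it the genuine content is the extraction of $\min K$ and $\max K$ from compactness via the finite intersection property applied to principal lower and upper sets; the subsequent identification of these extrema with $\inf C$ and $\sup C$ is a short argument that merely exploits the closedness of ${\uparrow}b$ and ${\downarrow}b$. Everything hinges on having $\bar C$ both a chain and compact, which is exactly what Lemma~\ref{l:cl-chain} together with chain-compactness delivers.
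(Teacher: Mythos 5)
Your proposal is correct and follows essentially the same route as the paper: the cycle $(1)\Ra(2)\Ra(3)\Ra(1)$ with the first two implications delegated to Lemmas~\ref{l:c=>Zar} and \ref{l:Zar=>cc}, and $(3)\Ra(1)$ proved by applying Lemma~\ref{l:cl-chain} to make $\bar C$ a compact chain, extracting $\min\bar C$ (and $\max\bar C$) from the centered family of closed principal lower (upper) sets, and identifying these with $\inf C$ and $\sup C$ via the closedness of ${\uparrow}b$ and ${\downarrow}b$. The only cosmetic difference is that the paper handles the supremum "by analogy" where you spell out the dual argument.
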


\begin{proof} The implications $(1)\Ra(2)\Ra(3)$ follow from Lemmas  \ref{l:c=>Zar} and \ref{l:Zar=>cc}. To prove that $(3)\Ra(1)$, assume that an ${\uparrow}{\downarrow}$-closed topologized semilattice $X$ is chain-compact and take any chain $C\subset X$. By Lemma~\ref{l:cl-chain}, the closure $\bar C$ of $C$ in $X$ is a chain. By the chain-compactness of $X$, the closed chain $\bar C$ is compact. By the compactness of $\bar C$ and the ${\uparrow}{\downarrow}$-closedness of $X$, the centered family $\{\bar C\cap{\downarrow}x:x\in \bar C\}$ of closed sets in $\bar C$ has non-empty intersection, consisting of a unique point $\min\bar C$, which is the smallest element of the chain $\bar C$. It is clear that $\min\bar C$ is a lower bound of the set $C$. For any other lower bound $b\in X$ of $C$ we get $C\subset{\uparrow}b$ and hence $\min\bar C\in\bar C\subset\overline{{\uparrow}b}={\uparrow}b$, which means that $b\le\min\bar C$ and $\inf C=\min \bar C\in\bar C$. By analogy we can prove that the chain has $\sup C=\max\bar C\in\bar C$.
\end{proof}

\section{On $U$-semilattices, $W$-semilattices, and $V$-semilattices}

In this section we shall prove some characterizations involving $U$-semilattices, $W$-semilattices, and $V$-semilattices, which were defined in the introduction.

It is clear that each $U$-semilattice is a $W$-semilattice. The converse is true for semitopological semilattices.

\begin{theorem}\label{t:UW} For a semitopological semilattice $X$ the following conditions are equivalent:
\begin{enumerate}
\item $X$ is a $U$-semilattice;
\item $X$ is a $W$-semilattice;
\item for any open set $U={\uparrow}U$ of $X$ and any point $x\in U$ there exists a point $y\in U$ whose upper set ${\uparrow}y$ is a neighborhood of $x$ in $X$.
\end{enumerate}
\end{theorem}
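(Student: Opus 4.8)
The plan is to prove the cyclic chain $(1)\Rightarrow(2)\Rightarrow(3)\Rightarrow(1)$. The implication $(1)\Rightarrow(2)$ is immediate: a witness $v\in V$ for the $U$-semilattice property yields the singleton $F=\{v\}$ as a witness for the $W$-semilattice property, since ${\uparrow}F={\uparrow}v$. Before attacking the two substantial implications I would isolate one elementary but crucial tool: \emph{in any semitopological semilattice the interior of an upper set is again an upper set}. Indeed, if $A={\uparrow}A$, $z\in\Int A$ and $z\le z'$, then the left shift $s_z\colon w\mapsto zw$ is continuous and $s_z(z')=zz'=z\in\Int A$, so $s_z^{-1}(\Int A)$ is an open neighbourhood of $z'$; and for every $w$ in it we have $zw\in A$ with $zw\le w$, whence $w\in A$ because $A$ is upper. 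Thus $s_z^{-1}(\Int A)\subseteq A$ and $z'\in\Int A$.

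For $(2)\Rightarrow(3)$ let $U={\uparrow}U$ be open and $x\in U$. Apply $(2)$ to obtain a finite $F\subseteq U$ with $x\in\Int{\uparrow}F$, put $J=\{f\in F:f\le x\}$ (nonempty because $x\in{\uparrow}F$) and form the meet $v=\prod_{f\in J}f\le x$. The key computation, using only separate continuity, is that $N:=s_x^{-1}(\Int{\uparrow}F)$ is an open neighbourhood of $x$ contained in ${\uparrow}v$: for $z\in N$ we have $xz\in{\uparrow}F$, so $xz\ge f$ for some $f\in F$; since $xz\le x$ this forces $f\le x$, i.e. $f\in J$ and hence $xz\ge f\ge v$, and as $xz\le z$ we conclude $z\ge v$. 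Therefore $x\in\Int{\uparrow}v$ and $v\le x$.

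It is precisely here that I expect the main obstacle: the definition of $(3)$ demands a witness lying in $U$, whereas the meet $v=\prod_{f\in J}f$ need not belong to $U$, because the finitely many points of $U$ are combined by an operation that moves strictly downward, away from an upward-closed set. To overcome this I would try to convert the finite family into a single generator of $U$: pass to the open upper set $\Int{\uparrow}v\cap U\ni x$ and re-apply $(2)$, obtaining meets $v\le v_1\le v_2\le\cdots\le x$ with $x\in\Int{\uparrow}v_i$, and argue, using the shift-continuity of the semitopological structure (which is what rules out the naive counterexamples), that the $W$-witness can be replaced by a single element of $U$. This reduction is the heart of the theorem and the step I expect to be hardest.

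The remaining implication $(3)\Rightarrow(1)$ is clean and I would carry it out in full. Given an arbitrary open $V\ni x$, set $V_1:=s_x^{-1}(V)=\{z:xz\in V\}$, which is open and contains $x$ (as $xx=x\in V$). By the tool above, $U:=\Int({\uparrow}V_1)$ is an open upper set, and $V_1\subseteq U$, so $x\in U$. Applying $(3)$ yields a point $y\in U$ with ${\uparrow}y$ a neighbourhood of $x$; in particular $y\le x$, and since $y\in U\subseteq{\uparrow}V_1$ there is $a\in V_1$ with $a\le y$. Then $a\le x$, so $xa=a$, and $a\in V_1$ gives $a=xa\in V$; moreover $a\le y$ yields ${\uparrow}y\subseteq{\uparrow}a$, so $x\in\Int{\uparrow}y\subseteq\Int{\uparrow}a$. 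Thus $v:=a\in V$ witnesses the $U$-semilattice property, completing the cycle.
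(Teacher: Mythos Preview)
Your argument for $(3)\Rightarrow(1)$ is correct and essentially matches the paper's: the paper observes directly that for open $V$ one has ${\uparrow}V=\bigcup_{v\in V}\ell_v^{-1}(V)$, which is open, and then applies~$(3)$; your route through $V_1=s_x^{-1}(V)$ and $\Int({\uparrow}V_1)$ reaches the same conclusion with one extra step. The implication $(1)\Rightarrow(2)$ is of course immediate.

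The genuine gap is in $(2)\Rightarrow(3)$, and you have correctly located it yourself: your meet $v=\prod_{f\in J}f$ gives $x\in\Int{\uparrow}v$, but nothing forces $v\in U$. Your proposed fix---iterating to produce a chain $v_0\le v_1\le\cdots\le x$---does not close the gap: although each $v_{i+1}$ lies above $v_i$ (meets of elements of ${\uparrow}v_i$ stay in ${\uparrow}v_i$), there is no mechanism guaranteeing that any $v_i$ ever lands in $U$, nor any compactness or finiteness to force termination.

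The idea you are missing is a Baire-type argument carried out \emph{inside} the principal ideal ${\downarrow}x$. The paper proves (as a separate lemma) that if a finite set $F$ has ${\uparrow}F$ a neighbourhood of $x$, then already some single $e\in F$ has ${\uparrow}e$ a neighbourhood of $x$. The proof: ${\uparrow}F\cap{\downarrow}x$ is a neighbourhood of $x$ in ${\downarrow}x$, hence not nowhere dense there; since a finite union of nowhere dense sets is nowhere dense, some ${\uparrow}e\cap{\downarrow}x$ has nonempty interior $W$ in ${\downarrow}x$. One then uses a shift by any $w\in W$ to show $x\in W$, and finally $\ell_x^{-1}(W)$ is an open neighbourhood of $x$ in $X$ contained in ${\uparrow}e$. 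With this lemma in hand, $(2)\Rightarrow(1)$ (and hence $(2)\Rightarrow(3)$) is a one-line application: from $F\subset V$ with $x\in\Int{\uparrow}F$ you get $e\in F\subset V$ with $x\in\Int{\uparrow}e$. The point is that one never needs to take meets of the witnesses; instead one shows that one of the original witnesses already works.
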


\begin{proof} The implications $(2)\Leftarrow(1)\Ra(3)$ are trivial.
\smallskip

$(3)\Ra(1)$ Assume that the semilattice $X$ satisfies the condition (3). To show that $X$ is a $U$-semilattice, fix any open set $V\subset X$ and a point $x\in V$. We need to find a point $v\in V$ whose upper set ${\uparrow}v$ contains $x$ in its interior.

The separate continuity of the semilattice operation implies that the set ${\uparrow}V=\bigcup_{v\in V}\ell^{-1}_v(V)$ is open. By the condition (3), there exists a point $y\in{\uparrow}V$ whose upper set ${\uparrow}y$ contains the point $x$ in its interior. For the point $y\in{\uparrow}V$ find a point $v\in V$ with $v\le y$ and observe that the upper set ${\uparrow}v\supset{\uparrow}y$ contains $x$ in its interior.
\smallskip

$(2)\Ra(1)$ Assume that $X$ is a $W$-semilattice. To show that $X$ is a $U$-semilattice, take any open set $V\subset X$ and any point $x\in V$.
We need to find an element $e\in V$ whose upper set ${\uparrow}e$ is a neighborhood of $x$ in $X$.

Since $X$ is a $W$-semilattice, there exists a finite set $F\subset V$ whose upper set ${\uparrow}F$ is a neighborhood of $x$. By Lemma~\ref{l:W}, for some $e\in F\subset V$ the upper set ${\uparrow}e$ is a neighborhood of $x$.
\end{proof}

\begin{lemma}\label{l:W} Let $X$ be a semitopological semilattice and $F\subset X$ be a finite subset whose upper set ${\uparrow}F$ contains a point $x\in X$ in its interior. Then for some $e\in F$ the upper set ${\uparrow}e$ is a neighborhood of $x$.
\end{lemma}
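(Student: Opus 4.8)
The plan is to prove Lemma~\ref{l:W} by induction on the cardinality of $F$, reducing the claim for a finite set to the case of a two-element set, which is the combinatorial heart of the argument. Write $F=\{e_1,\dots,e_k\}$, so that ${\uparrow}F=\bigcup_{i=1}^k{\uparrow}e_i$ is a neighborhood of $x$. The base case $k=1$ is trivial. For the inductive step it suffices to treat $k=2$: if I can show that whenever ${\uparrow}a\cup{\uparrow}b$ is a neighborhood of $x$ then either ${\uparrow}a$ or ${\uparrow}b$ is a neighborhood of $x$, then grouping $F$ as $\{e_1\}\cup\{e_2,\dots,e_k\}$ and applying this repeatedly will isolate a single $e\in F$ whose upper set is a neighborhood of $x$. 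So the whole lemma rests on the two-element statement.

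For the two-element case, suppose ${\uparrow}a\cup{\uparrow}b$ contains $x$ in its interior but neither ${\uparrow}a$ nor ${\uparrow}b$ is a neighborhood of $x$. The idea is to exploit separate continuity of the semilattice operation to move $x$ slightly and derive a contradiction. First I would consider the product element $ab=ba$, which is the meet of $a$ and $b$, and the shift $\ell_{ab}$ (or equivalently $\ell_a\circ\ell_b$). The point to observe is that for any $y$ we have $y\in{\uparrow}a\cup{\uparrow}b$ precisely controlling how $ay$ and $by$ compare to $a$ and $b$. Since neither ${\uparrow}a$ nor ${\uparrow}b$ is a neighborhood of $x$, every neighborhood of $x$ meets both $X\setminus{\uparrow}a$ and $X\setminus{\uparrow}b$. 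The plan is to use the separate continuity of the shifts $\ell_a$ and $\ell_b$ at the point $x$: the maps $y\mapsto ay$ and $y\mapsto by$ are continuous, so I can pull back neighborhoods of $ax$ and $bx$. The key algebraic fact is that if $x\in{\uparrow}a$ then $ax=a$, and if $x\in{\uparrow}b$ then $bx=b$; combining these lets me separate the two cases topologically.

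The main obstacle I expect is precisely the two-element case: turning ``neither ${\uparrow}a$ nor ${\uparrow}b$ is a neighborhood'' into an explicit contradiction using only separate (not joint) continuity. The natural route is to build a neighborhood $O_x$ of $x$ contained in ${\uparrow}a\cup{\uparrow}b$ and then find a single point $z\in O_x$ with $z\notin{\uparrow}a$ and $z\notin{\uparrow}b$, contradicting $O_x\subset{\uparrow}a\cup{\uparrow}b$. To produce such a $z$ I would start from $x\in{\uparrow}a\cup{\uparrow}b$, say $x\in{\uparrow}a$ so $ax=a$, and use that ${\uparrow}a$ fails to be a neighborhood to pick points arbitrarily close to $x$ outside ${\uparrow}a$; separate continuity of $\ell_b$ then lets me guarantee these witness points also avoid ${\uparrow}b$ whenever $x\notin{\uparrow}b$, which must hold on a dense-enough approach. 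Once the two-element case is secured, the inductive assembly is routine: a finite union of upper sets being a neighborhood of $x$ forces, by repeated bisection, one of the generating upper sets to be a neighborhood of $x$, giving the desired $e\in F$.
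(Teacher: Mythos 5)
There is a genuine gap, and it sits exactly where you yourself predicted: the two-element case. Your argument for it only works in the subcase $x\notin{\uparrow}b$: there (granting that ${\uparrow}b$ is closed, which already needs a separation axiom like $T_1$ plus the shift argument, and is not assumed in the lemma) the complement $X\setminus{\uparrow}b$ is an open neighborhood of $x$, and intersecting it with the given neighborhood inside ${\uparrow}a\cup{\uparrow}b$ shows that ${\uparrow}a$ is a neighborhood of $x$. But the hard subcase is $x\in{\uparrow}a\cap{\uparrow}b$, and there your sketch offers only the phrase ``which must hold on a dense-enough approach''. That is not an argument, and it cannot be completed by net/limit reasoning with separate continuity alone: knowing that every neighborhood of $x$ meets $X\setminus{\uparrow}a$ and also meets $X\setminus{\uparrow}b$ does not produce a single point of a neighborhood lying outside \emph{both} sets --- the two families of witnesses may approach $x$ through disjoint regions, exactly as two disjoint open sets can both accumulate at a point. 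Separately, your inductive assembly is also broken: grouping $F=\{e_1\}\cup\{e_2,\dots,e_k\}$ requires the dichotomy for ${\uparrow}e_1$ versus the \emph{non-principal} upper set ${\uparrow}\{e_2,\dots,e_k\}$, which the two-point statement does not cover; nor can you replace ${\uparrow}\{e_2,\dots,e_k\}$ by ${\uparrow}(e_2\cdots e_k)$, since the latter set is strictly larger, so concluding that it is a neighborhood of $x$ gives back nothing about $F$.

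The missing idea --- and the paper's actual proof --- is a Baire-category observation that handles all finite $F$ at once, with no induction. Work inside the subsemilattice ${\downarrow}x$: since ${\uparrow}F\cap{\downarrow}x$ is a neighborhood of $x$ in ${\downarrow}x$, it is not nowhere dense in ${\downarrow}x$; since a finite union of nowhere dense sets is nowhere dense, some set ${\uparrow}e\cap{\downarrow}x$ with $e\in F$ has non-empty interior $W$ in ${\downarrow}x$. The algebra then transports $W$ to $x$ by two shift arguments: for $w\in W$ one has $xw=w$ (as $w\le x$), so $O_x:=\{z\in{\downarrow}x:zw\in W\}$ is a relatively open neighborhood of $x$ contained in ${\uparrow}W\cap{\downarrow}x\subset{\uparrow}e\cap{\downarrow}x$ and hence in $W$; thus $x\in W$, and then $\ell_x^{-1}(W)=\{y\in X:xy\in W\}$ is an open neighborhood of $x$ in $X$ contained in ${\uparrow}W\subset{\uparrow}e$, so ${\uparrow}e$ is a neighborhood of $x$. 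If you wish to keep your two-set formulation, the statement you actually need is precisely this category fact (a union of finitely many sets, none of which is ``somewhere dense'' near $x$ after closing, cannot be a neighborhood of $x$), and proving it together with the shift transport is the whole content of the lemma; the reduction-to-two-sets scaffolding adds nothing once that is in place.
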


\begin{proof} Since ${\uparrow}F\cap{\downarrow}x$ is a neighborhood of $x$ in ${\downarrow}x$, it is not nowhere dense in ${\downarrow}x$. Since the finite union of nowhere dense sets is nowhere dense, for some element $e\in F$ the closed set ${\uparrow}e\cap{\downarrow}x$ has non-empty interior $W$ in ${\downarrow}x$. We claim that $x\in W$. Indeed, for any $w\in W$ we get $w=wx$ and by the continuity of the shift $\ell_w:{\downarrow}x\to{\downarrow}x$, the set $O_x:=\{z\in{\downarrow}x:zw\in W\}$ is an open neighborhood of $x$ in ${\downarrow}x$. It is clear that $O_x\subset{\uparrow}W\cap{\downarrow}x\subset {\uparrow}e\cap{\downarrow}x$ and hence $O_x\subset W$.

By the continuity of the shift $\ell_x:X\to{\downarrow}x$, $\ell_x:y\mapsto xy$, the set $\ell_x^{-1}(W)=\{y\in X:xy\in W\}$ is an open neighborhood of $x$ in $X$ such that $\ell_x^{-1}(W)\subset{\uparrow}W\subset {\uparrow}e$. This means that for the point $e\in F\subset U$ the upper set ${\uparrow}e$ is a neighborhood of $x$.
\end{proof}

The equivalence $(1)\Leftrightarrow(3)$ of Theorem~\ref{t:UW} means that our definition of a $U$-semilattice is equivalent to the classical definition of a $U$-semilattice given in \cite[p.16]{CHK}. The (proof of) Lemma 2.10 in \cite{CHK} yields the following important fact.

\begin{theorem}[Lawson]\label{t:UI} Each Hausdorff semitopological $U$-semilattice is $\II$-separated.
\end{theorem}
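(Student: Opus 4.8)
The plan is to verify the defining property of $\II$-separation: any two distinct points are distinguished by a continuous homomorphism into $\II=([0,1],\min)$. First I would reduce the problem to a comparable pair. Given distinct $x,y\in X$, at least one of the relations $x\le y$, $y\le x$ fails; after swapping $x$ and $y$ if necessary, assume $x\not\le y$ and put $z:=xy$. Then $z\le x$, $z\le y$, and $z\ne x$ (because $x\not\le y$ means $xy\ne x$), so $z<x$. If I manage to find a continuous homomorphism $h:X\to\II$ with $h(z)\ne h(x)$, then $h(z)=h(xy)=\min\{h(x),h(y)\}$ forces $h(y)<h(x)$, so $h(x)\ne h(y)$. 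Hence it suffices to separate a point $x$ from a strictly smaller point $z<x$ by a continuous homomorphism into $\II$.

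Next I would produce a closed principal filter that is a neighborhood of $x$ and misses $z$. Since the right shift $r_z:X\to X$, $r_z(a)=az$, is continuous and $r_z(x)=xz=z\ne x$, Hausdorffness yields disjoint open sets $A\ni x$ and $B\ni z$ and an open set $C\ni x$ with $r_z(C)\subset B$; then $O:=A\cap C$ is an open neighborhood of $x$ in which every $a$ satisfies $a\not\le z$ (otherwise $a=az\in A\cap B=\emptyset$). Applying the $U$-semilattice property to $O\ni x$ gives $e\in O$ with $x\in\Int({\uparrow}e)$, so that $e\le x$ and $e\not\le z$, i.e.\ $z\notin{\uparrow}e$. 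Since $X$ is Hausdorff, the principal filter ${\uparrow}e=\{a:ae=e\}=r_e^{-1}(e)$ is the preimage of a closed point under $r_e$, hence ${\uparrow}e$ is closed. Thus ${\uparrow}e$ is a closed neighborhood of $x$ with $z\notin{\uparrow}e$, and by Theorem~\ref{t:UW} and Lemma~\ref{l:W} such closed principal-filter neighborhoods of $x$ can be located inside any prescribed open neighborhood of $x$.

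Finally I would run a Urysohn-type construction over the dyadic rationals $D=\{k/2^n\}\cap[0,1]$. Starting from $e_1:=e$, I would build a monotone family $(e_t)_{t\in D\cap(0,1]}$ (so $s\le t$ implies $e_s\le e_t$, equivalently ${\uparrow}e_t\subset{\uparrow}e_s$) of generators of closed neighborhoods ${\uparrow}e_t$ of $x$ satisfying the strict nesting ${\uparrow}e_t\subset\Int({\uparrow}e_s)$ whenever $s<t$, arranging $e_{1/2}\in O$ so that $z\notin{\uparrow}e_t$ for all $t\ge\tfrac12$. I then set $h(a):=\sup(\{0\}\cup\{t\in D\cap(0,1]:e_t\le a\})$. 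By monotonicity $S_a:=\{t:e_t\le a\}$ is an initial segment of $D$, and $e_t\le ab$ iff $e_t\le a$ and $e_t\le b$, so $h(ab)=\min\{h(a),h(b)\}$ and $h$ is a homomorphism. For continuity I check subbasic preimages: $\{a:h(a)\ge c\}=\bigcap_{d\in D,\,d<c}{\uparrow}e_d$ is closed as an intersection of closed filters, so $\{h<c\}$ is open; while $\{a:h(a)>c\}=\bigcup_{t\in D,\,t>c}{\uparrow}e_t$ is open because the strict nesting makes each of its points interior (for $a\in{\uparrow}e_t$ with $t>c$ pick a dyadic $c<t'<t$, so $a\in{\uparrow}e_t\subset\Int({\uparrow}e_{t'})$). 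Since $e_t\le e_1\le x$ for all $t$ we get $h(x)=1$, and $z\notin{\uparrow}e_t$ for $t\ge\tfrac12$ gives $S_z\cap[\tfrac12,1]=\emptyset$, whence $h(z)\le\tfrac12<1=h(x)$.

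The step I expect to be the main obstacle is the interpolation: given closed principal-filter neighborhoods with ${\uparrow}e_t\subset\Int({\uparrow}e_s)$, one must insert a new closed principal-filter neighborhood ${\uparrow}e_r$ of $x$ with ${\uparrow}e_t\subset\Int({\uparrow}e_r)$ and ${\uparrow}e_r\subset\Int({\uparrow}e_s)$. This is a normality-type statement in which the \emph{entire} upper set ${\uparrow}e_r$ must be squeezed into a prescribed open set, and it is precisely the content of (the proof of) Lemma~2.10 in \cite{CHK}; it is carried out using the $U$-semilattice property together with Theorem~\ref{t:UW} and the reduction of finite upper-set neighborhoods to single principal filters supplied by Lemma~\ref{l:W}.
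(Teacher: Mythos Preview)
Your proposal is correct and follows exactly the route the paper indicates: the paper does not give an independent proof but simply invokes ``(the proof of) Lemma~2.10 in \cite{CHK}'', and your sketch is precisely the Urysohn-type construction carried out there. One small remark: in the interpolation step the references to Theorem~\ref{t:UW} and Lemma~\ref{l:W} are unnecessary---applying the $U$-property at $e_t$ inside $\Int({\uparrow}e_s)$ already yields $e_r$ with $e_t\in\Int({\uparrow}e_r)$ and $e_r\in\Int({\uparrow}e_s)$, and since in a semitopological semilattice the interior of an upper set is an upper set (via $\ell_a^{-1}(\Int({\uparrow}e))\subset{\uparrow}e$), both required inclusions ${\uparrow}e_t\subset\Int({\uparrow}e_r)$ and ${\uparrow}e_r\subset\Int({\uparrow}e_s)$ follow at once.
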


Next, we establish some properties of $V$-semilattices.

\begin{proposition}\label{p:VCH} Each semitopological $V$-semilattice $X$ satisfying the separation axiom $T_1$ is $\Zar$-Hausdorff.
\end{proposition}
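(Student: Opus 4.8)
The plan is to verify the combinatorial criterion for $\Zar$-Hausdorffness recorded earlier: for any two distinct points $x,y\in X$ I must exhibit a finite cover of $X$ by closed subsemilattices, none of which contains both $x$ and $y$. In fact a cover by just two sets will turn out to suffice, so the whole argument is short once the right two sets are identified.

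First I would reduce to the case $x\not\le y$. Since $x\ne y$ and $\le$ is a partial order, the two points cannot be $\le$-comparable in both directions, so at least one of $x\not\le y$ or $y\not\le x$ holds, and by symmetry I may assume $x\not\le y$. Applying the $V$-semilattice property to this pair yields a point $v\notin{\downarrow}y$ whose upper set ${\uparrow}v$ contains $x$ in its interior. In terms of the operation, $v\notin{\downarrow}y$ says precisely $y\notin{\uparrow}v$, while $x\in\Int{\uparrow}v$ furnishes an open set $O$ with $x\in O\subset{\uparrow}v$.

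The first member of the cover will be $F:={\uparrow}v$. I would check that $F$ is a closed subsemilattice: it is a subsemilattice because $v\le z_1$ and $v\le z_2$ imply $v\le z_1z_2$, and it is closed because ${\uparrow}v=\ell_v^{-1}(\{v\})$ is the preimage of the singleton $\{v\}$ (closed by $T_1$) under the continuous shift $\ell_v$. The second member will be $G:=\overline{X\setminus{\uparrow}v}$. The key — and mildly surprising — observation, which I expect to be the main obstacle, is that the complement $X\setminus{\uparrow}v$ is already a subsemilattice: if $z_1\not\ge v$ and $z_2\not\ge v$, then $z_1z_2\not\ge v$, since $z_1z_2\ge v$ would force $z_1\ge z_1z_2\ge v$. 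Because every semitopological semilattice is subtopological, the closure $G$ of this subsemilattice is again a closed subsemilattice.

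It then remains to see that $\{F,G\}$ has the required properties. Plainly $F\cup G\supset{\uparrow}v\cup(X\setminus{\uparrow}v)=X$, so this is indeed a cover of $X$. We have $x\in O\subset{\uparrow}v=F$ and $y\notin{\uparrow}v=F$, so $F$ contains $x$ but not $y$; on the other side $y\in X\setminus{\uparrow}v\subset G$, whereas $x\notin G$ since $O$ is an open neighborhood of $x$ disjoint from $X\setminus{\uparrow}v$. Hence neither member of the cover contains both $x$ and $y$, and the criterion for $\Zar$-Hausdorffness yields that $\Zar_X$ is Hausdorff. Once the fact that $X\setminus{\uparrow}v$ is a subsemilattice with closure avoiding $x$ is in hand, the two-set cover and its verification are immediate.
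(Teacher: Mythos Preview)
Your proof is correct and essentially the same as the paper's. Both arguments produce the same pair of closed subsemilattices, $\uparrow v$ and $\overline{X\setminus{\uparrow}v}=X\setminus\Int({\uparrow}v)$; the only cosmetic difference is that the paper verifies the latter is a closed subsemilattice by showing the interior $V$ of ${\uparrow}v$ is an open upper set (so $X\setminus V$ is a closed lower set), whereas you check directly that $X\setminus{\uparrow}v$ is a subsemilattice and then invoke the subtopological property to pass to the closure.
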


\begin{proof} To prove that the $V$-semilattice $X$ is $\Zar$-Hausdorff, fix any distinct points $x,y\in X$. Because of symmetry, we can assume that $x\not\le y$. Since $X$ is a $V$-semilattice, there exists a point $v\not\in{\downarrow}y$ whose upper set ${\uparrow}v$ contains $x$ in its interior $V$ in $X$. By the continuity of the left shifts on $X$, the set ${\uparrow}V=\bigcup_{u\in V}\ell_u^{-1}(V)$ is open in $X$. Taking into account that ${\uparrow}V\subset{\uparrow}v$, we conclude that ${\uparrow}V=V$ and then $F=X\setminus V$ is a closed subsemilattice of $X$.

Since $X$ is a $T_1$-space, the singleton $\{v\}$ is closed and so is its preimage ${\uparrow}v=\ell^{-1}_v(\{v\})$ under the continuous shift $\ell_v$.
Then $O_x:=X\setminus F$ and $O_y=X\setminus{\uparrow}v$ are disjoint $\Zar_X$-open neighborhoods of $x$ and $y$, respectively.
\end{proof}

We shall say that topologized semilattice $X$ is {\em ${\downarrow}$-chain-compact} if for each $x\in X$ the subsemilattice ${\downarrow}x:=\{y\in X:y\le x\}$ of $X$ is chain-compact.

\begin{theorem}\label{t:Vc} For a $\downarrow$-chain-compact semitopological semilattice the following conditions are equivalent:
\begin{enumerate}
\item $X$ is $\Zar$-Hausdorff;
\item $X$ is $\Zar\Tau$-separated;
\item $X$ is a $V$-semilattice satisfying the separation axiom $T_1$.
\end{enumerate}
\end{theorem}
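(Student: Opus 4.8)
The plan is to prove the cycle of implications $(1)\Rightarrow(2)\Rightarrow(3)\Rightarrow(1)$, of which only $(2)\Rightarrow(3)$ carries real content. The implication $(3)\Rightarrow(1)$ is exactly Proposition~\ref{p:VCH}. For $(1)\Rightarrow(2)$ I would simply note that if $\Zar_X$ is Hausdorff then any distinct $x,y$ have disjoint $\Zar_X$-neighborhoods $U\ni x$ and $V\ni y$; since $\Zar_X\subset\Tau_X$, the pair $U\in\Zar_X$, $V\in\Tau_X$ witnesses that $X$ is $\Zar\Tau$-separated.

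For $(2)\Rightarrow(3)$ I would first extract the axiom $T_1$: given $x\ne y$, applying $\Zar\Tau$-separation to the ordered pair $(x,y)$ produces disjoint $O_x\in\Zar_X$ and $O_y\in\Tau_X$; then $O_y$ is a $\Tau_X$-open neighborhood of $y$ missing $x$, so $X\setminus\{x\}=\bigcup_{y\ne x}O_y$ is open and $\{x\}$ is closed.

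The bulk of the work is showing that $X$ is a $V$-semilattice. Fix $x\not\le y$ and set $p:=xy$, so that $p<x$ and, for $v\le x$, one has $v\not\le y\iff v\not\le p$. Since the left shift $\ell_x\colon X\to\dowa x$ is continuous and $\ell_x^{-1}(\upa v\cap\dowa x)\subset\upa v$ for $v\le x$, it suffices to produce $v\le x$ with $v\not\le p$ for which the interval $\upa v\cap\dowa x$ is a neighborhood of $x$ in $\dowa x$. I would then work inside the $\dowa$-chain-compact semilattice $\dowa x$, whose upper sets are closed by $T_1$. Applying $\Zar\Tau$-separation to the pair $(p,x)$ --- this time assigning the $\Zar_X$-neighborhood to $p$ --- yields closed subsemilattices $E_1,\dots,E_m$ with $p\notin\bigcup_jE_j$ together with a $\Tau_X$-neighborhood $O_x$ of $x$ contained in $\bigcup_jE_j$. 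Intersecting with $\dowa x$, the sets $G_j:=E_j\cap\dowa x$ are closed subsemilattices whose union is a neighborhood of $x$ in $\dowa x$ and omits $p$. Using the chain-compactness of $\dowa x$ (in the spirit of the implication $(3)\Rightarrow(1)$ of Theorem~\ref{t:Cc}) each non-empty $G_j$ has a least element $m_j$, so $G_j\subset\upa m_j$ and $\bigcup_j\upa m_j$ is a neighborhood of $x$; then Lemma~\ref{l:W} supplies an index $j_0$ for which $\upa m_{j_0}$ is already a neighborhood of $x$, giving a candidate $v:=m_{j_0}$.

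The hard part is guaranteeing that this $v$ satisfies $v\not\le p$, rather than merely lying in a subsemilattice that avoids the single point $p$: the least element of $G_j$ can perfectly well sit below $p$. Overcoming this is where $\dowa$-chain-compactness must be used decisively, for instance by arranging the separating subsemilattices to miss the entire lower cone $\dowa p$ (not just $p$), or by descending transfinitely along a chain and invoking compactness of its closure to halt at a witness lying outside $\dowa p$. A second, related subtlety is that lower sets need not be closed under the hypotheses $T_1$ and $\Zar\Tau$-separated alone, so $\dowa x$ need not be $\upa\dowa$-closed; consequently one cannot invoke Theorem~\ref{t:Cc} verbatim to obtain completeness of $\dowa x$, and the existence of least elements of its closed subsemilattices must be re-derived from compactness of closed chains together with closedness of upper sets only (the analogue of Lemma~\ref{l:cl-chain} being the delicate point).
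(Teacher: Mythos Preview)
Your overall cycle $(1)\Rightarrow(2)\Rightarrow(3)\Rightarrow(1)$ and the handling of the two easy implications match the paper. One small correction: $\Zar\Tau$-separation already gives Hausdorffness (both neighborhoods lie in $\Tau_X$ since $\Zar_X\subset\Tau_X$), so in a semitopological semilattice the lower sets $\dowa a=\{z:az=z\}$ are closed. Your worry that ``lower sets need not be closed under the hypotheses $T_1$ and $\Zar\Tau$-separated alone'' is therefore unfounded, and you may freely use that $X$ is $\upa\dowa$-closed.

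The genuine gap is exactly the one you flag, and you have diagnosed it correctly but not closed it. Separating $x$ from the single point $p=xy$ yields closed subsemilattices $G_j$ that omit $p$ but may well meet $\dowa p$, so their minima can lie below $y$. The paper's resolution is to separate $x$ not from a point but from the whole set $\dowa y$. The key step you are missing is that $\dowa y$, being chain-compact and $\upa\dowa$-closed, is $\Zar$-compact by Theorem~\ref{t:Cc}, and by Corollary~\ref{c:subcomp} this means $\dowa y$ is compact as a subspace of $(X,\Zar_X)$. Now apply $\Zar\Tau$-separation to each pair $(z,x)$ with $z\in\dowa y$, giving $z$ a basic $\Zar_X$-neighborhood $O_z=X\setminus\bigcup\F_z$ and $x$ a $\Tau_X$-neighborhood $U_{x,z}$; compactness of $\dowa y$ extracts a finite subcover, and the finite intersection $U_x=\bigcap_{z\in E}U_{x,z}$ is a neighborhood of $x$ contained in $\bigcap_{z\in E}\bigcup\F_z\subset X\setminus\dowa y$. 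Expanding this intersection of unions produces finitely many closed subsemilattices, each disjoint from $\dowa y$; their minima (which exist by $\dowa$-chain-compactness) therefore satisfy $e\not\le y$, and Lemma~\ref{l:W} finishes the argument exactly as you outline. So the missing idea is precisely the $\Zar$-compactness of $\dowa y$ and its use to upgrade ``avoid $p$'' to ``avoid $\dowa y$''.
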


\begin{proof} The implication $(1)\Ra(2)$ is trivial and $(3)\Ra(1)$ was proved in Proposition~\ref{p:VCH}. To prove that $(2)\Ra(3)$, assume that $X$ is $\Zar\Tau$-separated. Then $X$ is Hausdorff and hence satisfies the separation axiom $T_1$. Moreover, the separate continuity of the semilattice opeartion implies that $X$ is  ${\uparrow}{\downarrow}$-closed. To prove that $X$ is a $V$-semilattice, take any two points $x\not\le y$ in $X$. We need to find a point $e\notin{\downarrow}y$ in $X$ whose upper set ${\uparrow}e$ is a neighborhood of $x$.

By our assumption, the subsemilattice ${\downarrow}y$ is chain-compact and by Theorem~\ref{t:Cc}, ${\downarrow}y$ is $\Zar$-compact. By Corollary~\ref{c:subcomp},  the set ${\downarrow}y$ is compact in the topological space $(X,\Zar_X)$.

Since the semilattice $X$ is $\Zar\Tau$-separated, for every point $z\in{\downarrow}y$ there exist disjoint open sets $O_z\in\Zar_X$ and $U_{x,z}\in\Tau_X$ such that $z\in O_z$ and $x\in U_{x,z}$. We can assume that $O_z$ is of basic form $O_z=X\setminus\bigcup\F_z$ for a finite family $\F_z$ of closed subsemilattices of $X$.

By the compactness of ${\downarrow}y$, the $\Zar_X$-open cover $\{O_z:z\in{\downarrow}y\}$ of ${\downarrow}y$ has a finite subcover $\{O_z:z\in E\}$ (here $E$ is a suitable finite subset of ${\downarrow}y$).
Then ${\downarrow}y\subset\bigcup_{z\in E}O_z=\bigcup_{z\in E}X\setminus\bigcup\F_z=X\setminus \bigcap_{z\in E}\bigcup\F_z$ and the $\Zar_X$-open set $X\setminus \bigcap_{z\in E}\bigcup\F_z$ does not intersect the neighborhood $U_x:=\bigcap_{z\in E}U_{x,z}$ of $x$.

It follows that $U_x\subset \bigcap_{z\in E}\bigcup\F_y\subset X\setminus{\downarrow}y$. Observe that $\bigcap_{z\in E}\bigcup\F_y=\bigcup\mathcal K$ where $\mathcal K=\{\bigcap_{z\in E}F_z:(F_z)_{z\in E}\in\prod_{z\in E}\F_z\}$. Each non-empty set $K\in\mathcal K$ is a closed subsemilattice in $X$ which has the smallest element $x_K$ by the ${\downarrow}$-chain-compactness of $X$. Consider the finite set $F=\{x_K:K\in\mathcal K\setminus\{\emptyset\}\}$ and observe that
$U_x\subset\bigcup\mathcal K\subset{\uparrow}F\subset X\setminus{\downarrow}y$.
By Lemma~\ref{l:W}, the set $F$ contains a point $e$ such that ${\uparrow}e$ is a neighborhood of $x$. It follows from $e\notin{\downarrow}y$ that $e\not\le y$.
\end{proof}

\section{Separation properties of weak topologies on compact topological semilattices}

In this section we prove that all separation properties in Diagram~\ref{eq2} are equivalent for compact Hausdorff semitopological semilattices. It should be mentioned that each compact Hausdorff semitopological semilattice is a topological semilattice, see \cite{Law74}. 

The equivalence of the conditions (1,2,4,6,7,9,11) in the following theorem is a well-known result of Lawson \cite{Law69}, see also \cite[VI-3.4]{Bible}.

\begin{theorem}\label{t:main} For a compact Hausdorff semitopological semilattice $X$ with topology $\Tau_X$ the following conditions are equivalent:
\begin{enumerate}
\item $\Weak_X=\Tau_X$;
\item $\Law_X=\Tau_X$;
\item $\Zar_X=\Tau_X$;
\item the weak$^\circ$ topology $\Law_X$ is Hausdorff;
\item the weak$^\bullet$ topology $\Zar_X$ is Hausdorff;
\item the $\II$-weak topology $\Weak_X$ is Hausdorff;
\item $X$ is $\Law\Tau$-separated;
\item $X$ is $\Zar\Tau$-separated;
\item $X$ is $\II$-separated;
\item $X$ is a $W$-semilattice;
\item $X$ is a $U$-semilattice;
\item $X$ is a $V$-semilattice.
\end{enumerate}
\end{theorem}

\begin{proof} It suffices to prove the chains of implications $(2)\Ra(4)\Ra(7)\Ra(2)$, $(11)\Ra(12)\Ra(3)\Leftrightarrow(5)\Leftrightarrow(8)\Ra(10)\Ra(11)$, and $(1)\Ra(2)\Ra(11)\Ra(9)\Ra(6)\Ra(1)$.
\smallskip

The implications $(2)\Ra(4)\Ra(7)$ trivially follow from the inclusions of the topologies $\Law_X\subset\Tau_X$.
\smallskip

$(7)\Ra(2)$: Assume that the compact semitopological semigroup $X$ is $\Law\Tau$-separated. Since $\Law_X\subset\Tau_X$, the equality $\Law_X=\Tau_X$ will follow as soon as we check for any point $x\in X$ and neighborhood $U\in\Tau_X$ of $X$ there exists a neighborhood $U_x\in\Law_X$ of $x$ such that $U_x\subset U$. Since $X$ is $\Law\Tau$-separated, for any $y\in X\setminus U$ we can find disjoint neighborhoods $U_{x,y}\in\Law_X$ and $O_y\in\Tau_X$ of the points $x$ and $y$, respectively. By the compactness of $X$ the open cover $\{O_y:y\in X\setminus U\}$ of the closed set $X\setminus U\subset X$ has a finite subcover $\{O_y:y\in F\}$. Then $U_x:=\bigcap_{y\in F}U_{x,y}$ is a $\Law_X$-open neighborhood of $x$, contained in $U$.
\smallskip

The implication $(11)\Ra(12)$ is trivial and $(12)\Ra(5)\Leftrightarrow(8)$ were proved in Theorem~\ref{t:Vc}. The equivalence $(3)\Leftrightarrow(5)$ follows from the inclusion $\Zar_X\subset\Tau_X$ and the compactness of the topology $\Tau_X$.
\smallskip

$(8)\Ra(10)$: Assume that $\Zar_X=\Tau_X$. To prove that $X$ is a $W$-semilattice, take any open set $U\subset X$ and point $x\in U$. We need to find a finite subset $F\subset U$ whose upper set ${\uparrow}F$ is a neighborhood of $x$.

Since the space $X$ is compact and Hausdorff, the point $x$ has a compact neighborhood $K_x\subset U$. Since $\Zar_X=\Tau_X$, each point $y\in X\setminus U$ has an open neighborhood $V_y\in\Zar_X$ which is disjoint with the compact set $K_x$. We can assume that $V_y=X\setminus \bigcup\F_y$ for a finite family $\F_y$ of closed subsemilattices of $X$. By the compactness of $X\setminus U$, the open cover $\{V_y:y\in X\setminus U\}$ has a finite subcover $\{V_y:y\in E\}$. Here $E\subset X\setminus U$ is a suitable finite set.

It follows that $X\setminus U\subset \bigcup_{y\in E}V_y=\bigcup_{y\in E}(X\setminus\bigcup\F_y)=X\setminus\bigcap_{y\in E}\bigcup\F_y\subset X\setminus K_x$ and hence $K_x\subset \bigcap_{y\in E}\bigcup\F_y\subset U$.

Observe that $\bigcap_{y\in E}\bigcup\F_y=\bigcup\mathcal K$ where $\mathcal K=\{\bigcap_{y\in E}F_y:(F_y)_{y\in E}\in\prod_{y\in E}\F_y\}$. Each set $K\in\mathcal K\setminus\{\emptyset\}$ is a non-empty compact subsemilattice of $X$, so $K\subset {\uparrow}x_K$ where $x_K\in K$ is the smallest element of $K$ (which exists by the compactness of $K$). It follows that $F=\big\{x_K:K\in\mathcal K\setminus\{\emptyset\}\big\}$ is a finite subset of $\bigcup\mathcal K\subset U$ such that $K_x\subset\bigcup\mathcal K\subset {\uparrow}F$, which means that ${\uparrow}F$ is a neighborhood of $x$.
\smallskip

The implication $(10)\Ra(11)$ was proved in Theorem~\ref{t:UW}.
\smallskip

The implication $(1)\Ra(2)$ trivially follows from the inclusions $\Weak_X\subset\Law_X\subset\Tau_X$.
\smallskip

$(2)\Ra(11)$: By \cite{Law74}, the compact Hausdorff semitopological semilattice $(X,\Tau_X)$ is a topological semilattice. If $X$ is weak$^\circ$, then by Theorem 2.12 \cite{CHK} and Theorem~\ref{t:UW}, $X$ is a $U$-semilattice.
\smallskip

$(11)\Ra(9)$: If $X$ is a $U$-semilattice, then $X$ is $\II$-separated by Theorem~\ref{t:UI}.
\smallskip

The implication $(9)\Ra(6)$ trivially follows from the definitions.
\smallskip

$(6)\Ra(1)$: If $X$ is $\Weak$-Hausdorff, then the identity map $(X,\Tau_X)\to(X,\Weak_X)$ is a homeomorphism by the compactness of $X$ and the Hausdorff property of $\Weak_X$.
\end{proof}

\begin{remark} Examples~\ref{ex1} and \ref{ex2} show that the equivalence of the conditions (2) and (3) in Theorem~\ref{t:main} does not hold for non-compact topological semilattices.
\end{remark}


Some statements of Theorem~\ref{t:main} remain equivalent for complete semitopological semilattices.


\begin{theorem}\label{t:final} For a complete semitopological semilattice $X$ the following conditions are equivalent:
\begin{enumerate}
\item the topology $\Zar_X$ is Hausdorff;
\item the topology $\Weak_X$ is Hausdorff;
\item $X$ is $\II$-separated;
\item $X$ is a $V$-semilattice satisfying the separation axiom $T_1$;
\item $X$ is a $T_0$-space and $\Weak_X=\Zar_X$.
\end{enumerate}
\end{theorem}

\begin{proof} The equivalence $(2)\Leftrightarrow(3)$ is trivial and $(2)\Ra(1)$ follows from the inclusion $\Weak_X\subset\Zar_X$.
\smallskip

$(1)\Ra(3)$: Assume that the topology $\Zar_X$ is Hausdorff. Then $X$ is Hausdorff and by Corollary~\ref{c:Cst} and Theorem~\ref{t:Cc}, $(X,\Zar_X)$ is a Hausdorff compact semitopological semilattice. Since each $\Zar_X$-closed subsemilattice in $X$ is $\Tau_X$-closed, the semitopological semilattice $(X,\Zar_X)$ is weak$^\bullet$ and by Theorem~\ref{t:main}, $\II$-separated. Since $\Zar_X\subset\Tau_X$, the semitopological semilattice $(X,\Tau_X)$ is $\II$-separated, too.
\smallskip

The implication $(1)\Ra(4)$ follows from Theorems~\ref{t:Vc} and \ref{t:Cc}. The implication $(4)\Ra(1)$ is proved in Proposition~\ref{p:VCH}.
\smallskip

$(2)\Ra(5)$: Assume that  the topology $\Weak_X$ is Hausdorff. Then $X$ is Hausdorff and hence satisfies the separation axiom $T_0$. By Theorem~\ref{t:Cc}, the weak$^\bullet$ topology $\Zar_X$ is compact, which implies that the identity map $(X,\Zar_X)\to (X,\Weak_X)$ to the Hausdorff space $(X,\Weak_X)$ is a homeomorphism and hence $\Zar_X=\Weak_X$.
\smallskip

$(5)\Ra(3)$: Assume that $X$ is a $T_0$-space and $\Zar_X=\Weak_X$. By Proposition~\ref{p:ZarT0}, the weak$^\bullet$ topology $\Zar_X$ satisfies the separation axiom $T_0$. Assuming that $X$ is not $\II$-separated, we can find two distinct points $x,y\in X$ such that $h(x)=h(y)$ for any continuous homomorphism $h:X\to\II$. Then the points $x,y$ cannot be separated by $\Weak_X$-open sets and $\Weak_X$ does not satisfy the separation axiom $T_0$. So, $\Weak_X\ne\Zar_X$, which is a desired contradiction.
\end{proof}

\begin{proposition} If a Hausdorff complete  semitopological semilattice $X$ is weak$^\bullet$, then $X$ is a weak$^\circ$ compact topological semilattice.
\end{proposition}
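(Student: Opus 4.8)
The plan is to reduce the statement to the compact case and then read off both remaining conclusions from the master equivalence Theorem~\ref{t:main}. The hypothesis that $X$ is weak$^\bullet$ means precisely that $\Tau_X=\Zar_X$, and the decisive point is that for a \emph{complete} semilattice the weak$^\bullet$ topology is already compact. The equality $\Tau_X=\Zar_X$ then transfers this compactness back to the original topology, after which the problem is purely a matter of citing known equivalences.

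First I would record that a Hausdorff semitopological semilattice is automatically ${\uparrow}{\downarrow}$-closed: for each $x\in X$ the set ${\downarrow}x=\{y:xy=y\}$ is the equalizer of the continuous shift $\ell_x\colon y\mapsto xy$ with the identity map, hence closed in the Hausdorff space $X$, while ${\uparrow}x=\{y:xy=x\}=\ell_x^{-1}(x)$ is the preimage of the closed singleton $\{x\}$ and is therefore closed as well. Thus $X$ is an ${\uparrow}{\downarrow}$-closed complete topologized semilattice, so Theorem~\ref{t:Cc} applies and gives that $X$ is $\Zar$-compact; that is, the weak$^\bullet$ topology $\Zar_X$ is compact. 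Since $X$ is weak$^\bullet$, we have $\Tau_X=\Zar_X$, whence the original topology $\Tau_X$ is compact and $X$ is a compact Hausdorff semitopological semilattice. By Lawson's theorem \cite{Law74} (quoted at the beginning of Section~8), every compact Hausdorff semitopological semilattice is in fact a topological semilattice, which already yields two of the three desired conclusions.

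It then remains to verify that $X$ is weak$^\circ$. At this stage $X$ is a compact Hausdorff semitopological semilattice satisfying condition (3) of Theorem~\ref{t:main}, namely $\Zar_X=\Tau_X$; by the equivalence $(2)\Leftrightarrow(3)$ in that theorem we obtain $\Law_X=\Tau_X$, which is exactly the assertion that $X$ is weak$^\circ$. I expect the only genuinely substantive step to be the transfer of compactness in the second paragraph, where completeness (through Theorem~\ref{t:Cc}) combines with the weak$^\bullet$ hypothesis to make $\Tau_X$ compact; everything after that is a direct appeal to the already-established equivalences for compact Hausdorff semitopological semilattices, so no further estimates or constructions are needed.
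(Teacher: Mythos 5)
Your proof is correct and follows essentially the same route as the paper's: completeness plus Theorem~\ref{t:Cc} gives compactness of $\Zar_X$, the weak$^\bullet$ hypothesis transfers this to $\Tau_X$, and then Theorem~\ref{t:main} together with Lawson's theorem \cite{Law74} yields the weak$^\circ$ property and joint continuity. Your explicit verification that a Hausdorff semitopological semilattice is ${\uparrow}{\downarrow}$-closed (needed for the stated hypotheses of Theorem~\ref{t:Cc}) is a careful touch the paper leaves implicit.
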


\begin{proof} By Theorem~\ref{t:Cc}, the weak$^\bullet$ topology $\Zar_X$ on $X$ is compact. If $X$ is weak$^\bullet$, then $X$ is compact and by Theorem~\ref{t:main} the compact semitopological semilattice $X$ is weak$^\circ$. By \cite{Law74}, the compact Hausdorff semitopological semilattice $X$ is a topological semilattice.
\end{proof}

The following proposition gives a partial affirmative answer to Problem~\ref{prob:prod}.

\begin{proposition}\label{p:prod2} If $X=\prod_{\alpha\in A}X_\alpha$ is the Tychonoff product of\/  $\Zar\Tau$-separated complete semitopological semilattices, then the identity map
$$(X,\Zar_X)=(X,\Weak_X)\to\prod_{\alpha\in A}(X_\alpha,\Weak_{X_\alpha})=\prod_{\alpha\in A}(X_\alpha,\Zar_{X_\alpha})$$ is a homeomorphism.
\end{proposition}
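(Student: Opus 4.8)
The plan is to reduce everything to the elementary principle that a continuous bijection from a compact space onto a Hausdorff space is a homeomorphism, applied to the identity map of the set $X$ carrying two comparable topologies. Throughout write $\Tau'$ for the product topology $\prod_{\alpha\in A}\Weak_{X_\alpha}$ on the underlying set of $X$, so that $(X,\Tau')=\prod_{\alpha\in A}(X_\alpha,\Weak_{X_\alpha})$.

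First I would dispose of the factors. Each $X_\alpha$ is complete and $\Zar\Tau$-separated, hence Hausdorff, hence $T_1$, so by separate continuity it is ${\uparrow}{\downarrow}$-closed; Theorem~\ref{t:Cc} then makes it chain-compact, and since each ${\downarrow}x$ is closed it is even ${\downarrow}$-chain-compact. Theorem~\ref{t:Vc} now upgrades $\Zar\Tau$-separatedness to $\Zar$-Hausdorffness, i.e.\ condition (1) of Theorem~\ref{t:final} holds for $X_\alpha$; consequently $\Weak_{X_\alpha}=\Lawson_{X_\alpha}=\Zar_{X_\alpha}$, this common topology is compact (Theorem~\ref{t:Cc}) and Hausdorff, and $X_\alpha$ is $\II$-separated. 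By \cite{Law74}, $Y_\alpha:=(X_\alpha,\Weak_{X_\alpha})$ is a compact Hausdorff topological semilattice, and since the $\Weak_{X_\alpha}$-continuous homomorphisms $X_\alpha\to\II$ coincide with the $\Tau_{X_\alpha}$-continuous ones, $Y_\alpha$ is $\II$-weak. By Proposition~\ref{p:product} the product $(X,\Tau')=\prod_\alpha Y_\alpha$ is an $\II$-weak compact Hausdorff topological semilattice; and because every $\Tau'$-continuous homomorphism $X\to\II$ is $\Tau_X$-continuous, the subbase defining $\Tau'$ lies in $\Weak_X$, giving the inclusion $\Tau'\subseteq\Weak_X$.

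Next I would show that $X$ itself is complete and $\II$-separated, so that Theorem~\ref{t:final} applies to $X$. The $\II$-separatedness is immediate: points differing in a coordinate $\alpha$ are separated by $h_\alpha\circ\pr_\alpha$ for a suitable continuous $h_\alpha\colon X_\alpha\to\II$. The completeness of the product is the main obstacle, and I would obtain it from the auxiliary observation that in \emph{any} complete topologized semilattice a chain, viewed as a monotone net indexed by itself, converges to its supremum: were a neighbourhood $V$ of $s=\sup C$ avoided cofinally, the cofinal subchain $C\setminus V$ would again have supremum $s$, forcing $s\in\overline{C\setminus V}\subseteq X\setminus V$, a contradiction. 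Applying this coordinatewise to a chain $C\subseteq X$, for each $\alpha$ the monotone net $(\pr_\alpha(c))_{c\in C}$ is eventually inside every neighbourhood of $s_\alpha:=\sup\pr_\alpha(C)$; since a basic neighbourhood of $s:=(s_\alpha)_\alpha$ constrains only finitely many coordinates and $C$ is totally ordered, taking the largest of the finitely many thresholds shows a single tail of $C$ lands in it, whence $s=\sup C\in\overline{C}$ (and dually $\inf C\in\overline C$). Thus $X$ is complete, so Theorem~\ref{t:final} yields $\Weak_X=\Lawson_X=\Zar_X$, and Theorem~\ref{t:Cc} makes $\Zar_X$, hence $\Weak_X$, compact.

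Finally I would assemble the pieces. Since $\Tau'\subseteq\Weak_X$, the identity map $(X,\Weak_X)\to(X,\Tau')$ is a continuous bijection from a compact space onto a Hausdorff space, hence a homeomorphism, so $\Weak_X=\Tau'$. Combining this with $\Zar_X=\Weak_X$ (from Theorem~\ref{t:final}) and $\Weak_{X_\alpha}=\Zar_{X_\alpha}$ (from the first step) gives exactly
$$(X,\Zar_X)=(X,\Weak_X)=\Big(X,\textstyle\prod_\alpha\Weak_{X_\alpha}\Big)=\prod_\alpha(X_\alpha,\Weak_{X_\alpha})=\prod_\alpha(X_\alpha,\Zar_{X_\alpha}),$$
so the identity map of the statement is a homeomorphism. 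The only genuinely new ingredient is the product-completeness step, powered by the chain-net convergence lemma; everything else is bookkeeping with the equivalences of Theorem~\ref{t:final} and the compact-to-Hausdorff principle.
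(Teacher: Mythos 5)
Your proof is correct, and its overall skeleton matches the paper's: both arguments funnel everything into Theorem~\ref{t:final} (to get $\Weak=\Zar$ on the factors and on the product), Theorem~\ref{t:Cc} (to get compactness of $\Zar_X=\Weak_X$), and then the compact-to-Hausdorff principle applied to the continuous identity map onto $\prod_\alpha(X_\alpha,\Weak_{X_\alpha})$. The genuine difference lies in how the two key ingredients are sourced. First, the paper simply cites \cite{BBo} for the fact that the Tychonoff product of complete Hausdorff semitopological semilattices is complete, whereas you prove this from scratch: your chain-net convergence lemma (a chain in an up-complete topologized poset, viewed as a net indexed by itself, converges to its supremum --- otherwise the cofinal subchain avoiding a neighbourhood of $\sup C$ would have the same supremum inside a closed complement) combined with the finitely-many-thresholds argument for basic product neighbourhoods is exactly the right tool, and it makes the proposition self-contained where the paper outsources its only non-bookkeeping step. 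Second, the paper obtains continuity of the identity $(X,\Weak_X)\to\prod_\alpha(X_\alpha,\Weak_{X_\alpha})$ directly from Proposition~\ref{p1}(1), while you route it through Proposition~\ref{p:product} and the $\II$-weakness of the compact product $\prod_\alpha(X_\alpha,\Weak_{X_\alpha})$; this is more roundabout but equivalent. A final point in your favour: $\Zar\Tau$-separatedness is not literally among the seven conditions of Theorem~\ref{t:final} as stated, and you bridge this gap explicitly via ${\downarrow}$-chain-compactness and Theorem~\ref{t:Vc}, a step the paper's proof leaves implicit.
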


\begin{proof} By Theorem~\ref{t:final}, the topologized semilattices $X_\alpha$, $\alpha\in A$, are $\II$-separated and so is their Tychonoff product $X$. Also Theorem~\ref{t:final} implies that $\Zar_X=\Weak_X$ and $\Zar_{X_\alpha}=\Weak_{X_\alpha}$ for all $\alpha\in A$. By \cite{BBo}, the Hausdorff semitopological semilattice $X$ is complete and by Theorem~\ref{t:Cc}, the weak$^\bullet$ topology $\Zar_X$ is compact and so is the $\II$-weak topology $\Weak_X=\Zar_X$. By Proposition~\ref{p1}(1), the identity map
$$\mathrm{id}:(X,\Weak_X)\to\prod_{\alpha\in A}(X_\alpha,\Weak_{X_\alpha})$$
is continuous and hence is a homeomorphism (by the compactness of the topology $\Weak_X$ and the Hausdorff property of the topologies $\Weak_{X_\alpha}$, $\alpha\in A$).
\end{proof}

\section{Applications to intrinsic topologies on semilattices}

In this section we apply the results of the preceding sections to studying intrinsic topologies on posets and semilattices.

We recall that a {\em poset} is a set endowed with  a partial order.  A topology $\tau$ on a poset $X$ is called {\em intrinsic} if each order isomorphism of $X$ is a homeomorphism in the topology $\tau$. Each poset $X$ carries many natural intrinsic topologies, see \cite{Law}, \cite{Vai}. Let us recall just five of them: the chain topology $\Chain_X$, the Dedekind topology $\Dedekind_X$, the interval topology $\Interval_X$, the Scott topology $\Scott_X$, and the Lawson topology $\Lawson_X$.

Let $X$ be a poset. A subset $A\subset X$ is called
\begin{itemize}
\item {\em chain-closed} if $A$ contains $\inf C$ and $\sup C$ of any chain $C\subset A$ that has $\inf C$ and $\sup C$ in $X$;
\item {\em chain-open} if the complement $X\setminus A$ is chain-closed in $X$.
\end{itemize}
It can be shown that finite unions and arbitrary intersections of chain-closed sets in $X$ are chain-closed, which implies that the family $\Chain_X$ of chain-open sets in $X$ is a topology,  called the {\em chain topology} of $X$, see \cite{Law} (in \cite{Vai} the chain topology was called the finest $i$-topology).
\smallskip

Next, we define the Dedekind topology $\Dedekind_X$ on $X$.
A subset $A$ of a poset $X$ is called
\begin{itemize}
\item {\em up-closed} if $A$ contains $\sup D$ of any up-directed subset $D\subset A$ having $\sup D$ in $X$;
\item {\em down-closed} if $A$ contains $\inf D$ of any down-directed subset $D\subset A$ with $\inf D$ in $X$;
\item {\em Dedekind closed} if $A$ is up-closed and down-closed in $X$;
\item {\em Dedekind open} if $X\setminus A$ is chain-closed in $X$.
\end{itemize}
It can be shown that finite unions and arbitrary intersections of Dedekind closed sets in $X$ are Dedekind closed, which implies that the family $\Dedekind_X$ of Dedekind open sets in $X$ is a topology, which will be called the {\em Dedekind  topology} on $X$, see \cite{Law}.

Since for any $x\in X$ the upper and lower sets ${\uparrow}x$ and ${\downarrow}x$ are Dedekind closed in $X$, the Dedekind topology $\Dedekind_X$ contains the {\em interval topology} $\Interval_X$ on $X$, generated by the subbase
$\{X\setminus{\downarrow}x,X\setminus{\uparrow}x:x\in X\}$.

The {\em Scott topology} $\Scott_X$ on a poset $X$ consists of Dedekind open sets $U={\uparrow}U$ where ${\uparrow}U=\bigcup_{x\in U}{\uparrow}x$.

The {\em Lawson topology} $\Lawson_X$ is generated by the subbase $\Scott_X\cup\{X\setminus{\uparrow}x:x\in X\}$.

It is clear that $$\Interval_X\subset\Lawson_X\subset\Dedekind_X\subset\Chain_X$$
for any poset $X$. More information on the Scott and the Lawson topologies can be found in \cite{Bible}.

We shall show that for a complete poset $X$ the topologies $\Dedekind_X$ and $\Chain_X$ coincide.

A poset $X$ is called {\em complete} if each non-empty chain $C\subset X$ has $\inf C$ and $\sup C$ in $X$. Observe that a poset $X$ is complete if and only if $X$ endowed with the antidiscrete topology is a complete topologized poset. This observation, combined with Lemma~\ref{l:Iwa} implies the following (known) characterization.

\begin{lemma}\label{l:complete-poset} A poset $X$ is complete if and only if each non-empty up-directed set $U\subset X$ has $\sup U\in X$ and each down-directed set $D\subset X$ has $\inf D\in X$.
\end{lemma}

\begin{lemma} For a complete poset $X$ we have $\Dedekind_X=\Chain_X$.
\end{lemma}

\begin{proof} Since $\Dedekind_X\subset\Chain_X$, it suffices to show that each chain-closed set $A\subset X$ is up-closed and down-closed.

By the completeness of $X$,  every non-empty up-directed subset $U\subset A$ has $\sup U\in X$. It remains to show that $\sup U\in A$. This will be proved by induction on the cardinality $|U|$ of $U$. If $U$ is finite, then $\sup U\in U\subset A$. Assume that for some infinite cardinal $\kappa$ we have proved that each up-directed subset $U\subset A$ of cardinality $|U|<\kappa$ the point $\sup U$ (which exists by the completeness of $X$) belongs to $A$. Take any non-empty up-direceted subset $U\subset A$ of cardinality $|U|=\kappa$. Write $U$ as the union $U=\bigcup_{\alpha<\kappa}U_\alpha$ of an increasing transfinite sequence $(U_\alpha)_{\alpha<\kappa}$ consisting of up-directed sets $U_\alpha$ of cardinality $|U_\alpha|<\kappa$. By the completeness of $X$ and Lemma~\ref{l:complete-poset}, each up-directed poset $U_\alpha$ has $\sup U_\alpha\in X$ and by the inductive assumption, $\sup U_\alpha\in A$. Since the transfinite sequence $(U_\alpha)_{\alpha\in\kappa}$ is increasing, the set $C:=\{\sup U_\alpha\}_{\alpha\in\kappa}$ is a chain in $A$. Since $X$ is complete, the chain $C$ has $\sup C$ in $X$ and since $A$ is chain-closed in $X$, $\sup C\in A$. It remains to observe that $\sup C=\sup U$.

By analogy we can prove that the set $A$ is down-closed in $X$ and hence $A$ is Dedekind closed in $X$.
\end{proof}

Now assume that a poset $X$ is a semilattice (which means that any finite subset $F\subset X$ has $\inf F\in X$). Then its Dedekind topology $\Dedekind_X$ induces three weaker intrinsic topologies:
$\DWeak_X$, $\DLaw_X$ and $\DZar_X$.

The topology
\begin{itemize}
\item $\DLaw_X$ is generated by the base consisting of Dedekind open subsemilattices in $X$;
\item $\DZar_X$ is generated by the base consisting of complements to Dedekind closed subsemilattices in $X$,
\item $\DWeak_X$ is generated by the subbase consisting of the preimages $h^{-1}(U)$ of open sets $U\subset\II$ under continuous semilattice homomorphisms $h:(X,\Dedekind_X)\to\II$.
\end{itemize}
The intrinsic topology $\DZar_X$ has been considered in \cite{Law} and called the {\em lower complete topology}. The topology $\DWeak_X$ will be called {\em the $\II$-weak topology} on $X$.

A semilattice $X$ is called {\em complete} if it is complete as a poset endowed with the partial order $\le$ defined by $x\le y$ iff $xy=x$. By Lemma~\ref{l:complete-poset}, this definition of completeness is equivalent to the definition of a complete semilattice given in  \cite[O-2.1(iv)]{Bible}.

For any (complete) semilattice $X$ we have the following diagram describing the relations between the intrinsic topologies defined above. In this diagram a (dotted) arrow $\mathcal A\to\mathcal B$ indicates that $\mathcal A\subset\mathcal B$ (if the semilattice is complete).
$$\xymatrix{
\Interval_X\ar[r]&\Lawson_X\ar[r]&\DZar_X\ar[r]&\Dedekind_X\ar[r]&\Chain_X\ar@/^10pt/@{.>}[l]\\
&\Scott_X\ar[u]&\DWeak_X\ar[u]\ar@{.>}[lu]\ar[r]&\DLaw_X\ar[u]
}
$$

The following corollary of Lemma~\ref{l:c=>Zar} characterizes semilattices whose topologies $\Interval_X$, $\Lawson_X$ and $\DZar_X$ are compact, thus generalizing Theorem~III-1.9 in \cite{Bible} and completing Proposition O-2.2(iv) in \cite{Bible}. In fact, the implication $(1)\Ra(2)$ was proved in \cite[7.4]{Law}, so should be attributed to Jimmie Lawson.

\begin{theorem}[Lawson]\label{t:Law-comp} For a semilattice $X$ the following conditions are equivalent:
\begin{enumerate}
\item $X$ is complete;
\item the lower complete topology $\DZar_X$ is compact;
\item the Lawson topology $\Lawson_X$ is compact;
\item the interval topology $\Interval_X$ is compact.
\end{enumerate}
\end{theorem}

\begin{proof} The implication $(1)\Ra(2)$ follows from Lemma~\ref{l:c=>Zar} and $(2)\Ra(3)\Ra(4)$ from the inclusions $\Interval_X\subset\Lawson_X\subset\DZar_X$.
\smallskip

$(4)\Ra(1)$ Assume that the interval topology $\Interval_X$ on $X$ is compact.
To show that $X$ is a complete semilattice, we need to show that each non-empty chain $C\subset X$ has $\inf C$ and $\sup C$ in $X$.

First we show that the chain $C$ has $\inf C$. For this observe that $\F=\{{\downarrow}c:c\in C\}$ is a centered family of $\Interval_X$-closed sets in $X$. By the compactness of the interval topology $\Interval_X$, the intersection $K=\bigcap\F$ is a closed non-empty set in $(X,\Interval_X)$. By the compactness of $K$ in the interval topology, the intersection $\bigcap_{x\in K}K\cap{\uparrow}x$ is non-empty and contains the unique element $\max K$, which coincides with $\inf C$ by the definition of the greatest lower bound $\inf C$.

By analogy we can show that $C$ has $\sup(C)\in X$.
\end{proof}

A semilattice $X$ is called {\em meet continuous} if for any up-directed subset $D\subset X$ having $\sup D\in X$ and any $a\in X$ the up-directed set $aD$ has $\sup(aD)=a\cdot\sup D$. On the other hand, it is known \cite[O-1.10]{Bible} that
for any down-directed subset $D$ of a semilattice $X$ having $\inf D\in X$ and any $a\in X$ the down-directed set $aD$ automatically has $\inf(aD)=a\cdot\inf D$.

Theorems O-4.2 and III-2.8 of \cite{Bible} imply the following characterization.

\begin{theorem}\label{t:meet} For a semilattice $X$ the following conditions are equivalent:
\begin{enumerate}
\item $X$ is meet continuous;
\item $(X,\Dedekind_X)$ is a semitopological semilattice;
\item $(X,\Lawson_X)$ is a semitopological semilattice.
\item $(X,\Scott_X)$ is a semitopological semilattice.
\end{enumerate}
\end{theorem}

\begin{example} Consider the semilattice $$X=\big(\{0,1\}\times (\w\cup\{\w,\w+1\})\big)\setminus\{(1,\w)\}$$endowed with the operation of coordinatewise minimum. It is easy to see that $X$ is complete and the interval topology $\Interval_X$ is Hausdorff and coincides with the Lawson topology $\Lawson_X$, which is compact and metrizable. On the other hand, the semilattice $X$ is not meet-continuous as the chain $C=\{1\}\times\w$ has $\sup C=(1,\w+1)$ and for $a=(0,\w+1)$ the chain $aC=\{0\}\times\w$ has $\sup aC=(0,\w)\ne a\cdot\sup C$. Therefore, $(X,\Lawson_X)=(X,\Interval_X)$ is a compact Hausdorff topologized semilattice, which is not semitopological.
\end{example}

\begin{corollary}\label{c:final} For a meet-continuous complete semilattice $X$ the following conditions are equivalent:
\begin{enumerate}
\item the lower complete topology $\DZar_X$ is Hausdorff;
\item the Lawson topology $\Lawson_X$ is Hausdorff;
\item the $\II$-weak topology $\DWeak_X$ is Hausdorff;
\item $(X,\Dedekind_X)$ is a $V$-semilattice;
\item $\DWeak_X=\DZar_X$;
\item $\DWeak_X=\Lawson_X=\DZar_X$.
\end{enumerate}
\end{corollary}

\begin{proof} By Theorem~\ref{t:meet}, the topologized semilattice $(X,\Dedekind_X)$ is  a semitopological semilattice. The definition of the topology $\Dedekind_X$ implies that it satisfies the separation axiom $T_1$.

Now the equivalence of the conditions (1)--(6) follows from Theorem~\ref{t:final} and the inclusions $\DWeak_X\subset\Lawson_X\subset\DZar_X$ holding because of the completeness of $X$.
\end{proof}

\section{The interplay between weak and intrinsic topologies on  semitopological semilattices}\label{s:Scott}

In this section we investigate the interplay between the weak topologies $\Weak_X$, $\Zar_X$ and the intrinsic topologies $\DWeak_X$, $\Lawson_X$, and $\DZar_X$ on a complete semitopological semilattice $X$.

Let $X$ be a topologized semilattice endowed with the topology $\tau_X$. If the topologized semilattice $(X,\tau_X)$ is complete, then the semilattice $X$ is complete as well and hence
$$\tau_X\subset\Chain_X=\Dedekind_X,$$
which implies the inclusions $\Weak_X\subset \DWeak_X$, $\Law_X\subset\DLaw_X$ and $\Zar_X\subset\DZar_X$.

Consequently, for any complete topologized semilattice $X$ we have the following diagram in which an arrow $\mathcal A\to\mathcal B$ indicates the inclusion $\mathcal A\subset\mathcal B$.
$$
\xymatrix{
\Weak_X\ar[rr]\ar[d]&&\Zar_X\ar[d]\\
\DWeak_X\ar[r]&\Lawson_X\ar[r]&\DZar_X
}
$$

Now we find conditions on a Hausdorff semitopological semilattice $X$ guaranteeing that $\Zar_X=\DZar_X$ or even $\Weak_X=\DZar_X$.

Let us recall that a topological space $X$ satisfies the separation axiom
\begin{itemize}
\item $T_1$ if for any distinct points $x,y\in X$ there exists an open set $U\subset X$ such that $x\in U\subset X\setminus\{y\}$;
\item $T_2$ if for any distinct points $x,y\in X$ there exists an open set $U\subset X$ such that $x\in U\subset\bar U\subset X\setminus\{y\}$;
\item $T_3$ if $X$ is a $T_1$-space and for any open set $V\subset X$ and point $x\in V$ there exists an open set $U\subset X$ such that $x\in U\subset\bar U\subset V$;
\item $T_{3\frac12}$ if $X$ is a $T_1$-space and for any open set $V\subset X$ and point $x\in V$ there exists a continuous function $f:X\to[0,1]$ such that $x\in f^{-1}([0,1))\subset V$;
\item $T_{2\delta}$ if $X$ is a $T_1$-space and for any open set $U\subset X$ and point $x\in U$ there exists a countable family $\U$ of closed neighborhoods of $x$ in $X$ such that $\bigcap\U\subset U$;
\item $\vv{T}_i$ for $i\in\{1,2,2\delta,3,3\frac12\}$ if $X$ admits an injective continuous map $X\to Y$ to a $T_i$-space $Y$.
\end{itemize}
Topological spaces satisfying a separation axiom $T_i$ are called {\em $T_i$-spaces}. The separation axioms $T_{2\delta}$ and $\vv{T}_{2\delta}$ were introduced in \cite{BBR}.

The following diagram describes the implications between the separation axioms $T_i$ and $\vv{T}_i$ for $i\in\{1,2,2\delta,3,3\frac12\}$.
$$\xymatrix{
T_{3\frac12}\ar@{=>}[r]\ar@{=>}[d]&T_3\ar@{=>}[r]\ar@{=>}[d]&T_{2\delta}\ar@{=>}[r]\ar@{=>}[d]&T_2\ar@{<=>}[d]\ar@{=>}[r]&T_1\ar@{<=>}[d]\\
\vv{T}_{3\frac12}\ar@{=>}[r]&\vv{T}_3\ar@{=>}[r]&\vv{T}_{2\delta}\ar@{=>}[r]&\vv{T}_2\ar@{=>}[r]&T_1
}
$$
Observe that a topological space $X$ satisfies the separation axiom $\vv{T}_{3\frac12}$ if and only if it is {\em functionally Hausdorff\/} in the sense that for any distinct points $x,y\in X$ there exists a continuous function $f:X\to\mathbb R$ with $f(x)\ne f(y)$. Therefore, each functionally Hausdorff space is a $\vv{T}_{2\delta}$-space.

A topological space $X$ is {\em sequential} if for each non-closed subset $A\subset X$ there exists a sequence $\{a_n\}_{n\in\w}\subset A$ that converges to a point $x\in X\setminus A$.

\begin{theorem}\label{t:Law} For a complete Hausdorff semitopological semilattice $X$ the topologies $\Zar_X$ and $\DZar_X$ coincide if one of the following conditions is satisfied:
\begin{enumerate}
\item $X$ is a topological semilattice;
\item $X$ is a sequential space;
\item $X$ is a $\vec T_{2\delta}$-space;
\item $X$ is functionally Hausdorff.
\end{enumerate}
\end{theorem}

\begin{proof} Since $\Zar_X\subset\DZar_X$, it remains to establish the inclusion $\DZar_X\subset \Zar_X$. Since the topology $\DZar_X$ is generated by the subbase consisting of complements to Dedekind closed subsemilattices in $X$, it suffices to prove that each Dedekind closed subsemilattice $S$ in $X$ is closed.

The completeness of $X$ and the Dedekind closedness of $S$ in $X$ implies that the semitopological semilattice $S$ is complete and hence closed in $X$ according to the following Theorem~\ref{t:BB}.
\end{proof}

\begin{theorem}\label{t:BB} For a continuous homomorphism $h:X\to Y$ from a complete topologized semilattice $X$ to a Hausdorff semitopological semilattice $Y$ the image $h(X)$ is closed in $Y$ if one of the following conditions is satisfied:
\begin{enumerate}
\item $Y$ is a topological semilattice;
\item $Y$ is a sequential space;
\item $Y$ is a $\vec T_{2\delta}$-space;
\item $Y$ is functionally Hausdorff.
\end{enumerate}
\end{theorem}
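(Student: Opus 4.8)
The plan is to reduce the statement to the purely order-theoretic assertion that the image $S:=h(X)$ is a \emph{complete} subsemilattice of $Y$, and then to deduce that such an $S$ must be closed in $Y$ by invoking the absolute $H$-closedness of complete semilattices. First I would record two preliminary remarks. Since $Y$ is a Hausdorff semitopological semilattice, for every $c\in Y$ the set ${\uparrow}c=\{y\in Y:yc=c\}$ is the preimage of the closed set $\{c\}$ under the continuous right shift $r_c$, and ${\downarrow}c$ is the equalizer of $r_c$ and the identity; hence $Y$ is ${\uparrow}{\downarrow}$-closed. Moreover, functional Hausdorffness coincides with $\vv T_{3\frac12}$ and $\vv T_{3\frac12}\Ra\vv T_{2\delta}$, so condition (4) is a special case of (3), and it suffices to treat (1), (2), (3).

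The core of the argument is to show that every non-empty chain $C\subset S$ has both $\inf C\in S$ and $\sup C\in S$, using that $X$ is complete and hence $\Zar$-compact by Lemma~\ref{l:c=>Zar}. For the infimum, choose $x_c\in h^{-1}(c)$ for each $c\in C$ and form the meet-closed, down-directed set $D=\{x_{c_1}\cdots x_{c_n}:c_1,\dots,c_n\in C\}\subset X$; by Lemma~\ref{l:Iwa} the completeness of $X$ gives $\inf D\in\bar D$, and continuity of $h$ yields $h(\inf D)\in\overline{h(D)}=\bar C$, because $h(D)=C$ (finite meets of a chain are minima). Since $h(\inf D)$ is a lower bound of $C$ and ${\uparrow}b$ is closed in $Y$ for every lower bound $b$, one concludes $h(\inf D)=\inf C\in S\cap\bar C$. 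For the supremum I would instead use $\Zar$-compactness directly: each $A_c:=h^{-1}({\uparrow}c)$ is a closed subsemilattice of $X$ (preimage of the closed subsemilattice ${\uparrow}c$), and the family $\{A_c:c\in C\}$ is centered, being decreasing with $h^{-1}(c)\subset A_c\ne\emptyset$. By Theorem~\ref{t:Cc1} the intersection $B:=\bigcap_{c\in C}A_c$ is a non-empty closed subsemilattice of $X$, which therefore has a smallest element $x_B$ by the argument in the proof of Lemma~\ref{l:c=>Zar}. Then $h(x_B)$ lies above every $c\in C$, and since $h(B)$ is exactly the set of upper bounds of $C$ lying in $S$, the monotonicity of $h$ forces $h(x_B)$ to be the least upper bound of $C$ in $S$. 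Thus $S$ is a complete semilattice.

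It then remains to deduce that a complete subsemilattice $S$ of $Y$ is closed, and this is where hypotheses (1)--(3) genuinely enter: the meet only produces infima, so a limit point $y_0\in\bar S\setminus S$ cannot be captured algebraically and must be pinned down topologically. In case (1) the joint continuity of the meet lets me replace a net $(s_\alpha)$ converging to $y_0$ by the decreasing net $(y_0 s_\alpha)$, again converging to $y_0$, and combine this with the completeness of $S$ to force $y_0\in S$; equivalently, one invokes the absolute $H$-closedness of complete semilattices in Hausdorff topological semilattices (\cite{BBc, BBm, Stepp1969}). In cases (2) and (3) I would aim to manufacture a \emph{decreasing} sequence in $S$ converging to $y_0$, whose infimum lies in $S$ by the infimum part above and, by ${\uparrow}{\downarrow}$-closedness, equals $y_0$, contradicting $y_0\notin S$: under sequentiality the non-closedness of $S$ supplies a convergent sequence to work with, while $\vv T_{2\delta}$ supplies a countable family of closed neighborhoods of $y_0$ with small intersection from which to extract one.

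The main obstacle is precisely this last step under the weak separation hypotheses (2) and (3), where the operation is only separately continuous. The delicate point is that suprema, unlike infima, are not produced by the algebraic structure, so one must convert topological convergence to $y_0$ into a \emph{monotone} decreasing sequence or net — passing to partial meets is the natural attempt, but its convergence is not automatic without joint continuity — whose infimum the completeness of $S$ can return to $S$; the role of sequentiality and of the countable closed-neighborhood bases of $\vv T_{2\delta}$ is exactly to carry out this conversion. I expect the cleanest write-up to isolate ``complete $\Ra$ closed'' as a separate lemma, proved uniformly for (1)--(3), rather than repeating the net manipulation in each case.
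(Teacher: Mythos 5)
You should first be aware that the paper does not prove Theorem~\ref{t:BB} at all: statements (1), (2), (3) are imported as the main results of \cite{BBm}, \cite{BBseq} and \cite{BBR} respectively, and (4) is deduced from (3) via functional Hausdorffness $\Rightarrow\vv{T}_{2\delta}$, exactly as you do. Your skeleton --- prove that $S:=h(X)$ is a complete subsemilattice of $Y$, then prove that complete subsemilattices of $Y$ are closed --- is indeed the architecture of those external proofs, and your infimum argument (finite meets of preimages, Lemma~\ref{l:Iwa}, ${\uparrow}{\downarrow}$-closedness of $Y$) is correct. But your supremum argument has a genuine defect: it only produces the least upper bound of a chain $C$ \emph{inside} $S$, whereas completeness of a topologized semilattice requires $\sup C\in\bar C$. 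The difference is not cosmetic, since order-completeness does not imply closedness: $S=[0,\frac12)\cup\{\frac34\}$ is an order-complete subsemilattice of $\II=([0,1],\min)$, yet it is not closed, precisely because the chain $C=[0,\frac12)$ has its supremum $\frac34$ in $S$ but outside $\bar C$. The repair stays within your method: apply Theorem~\ref{t:Cc1} and Lemma~\ref{l:c=>Zar} to the centered family of closed subsemilattices $h^{-1}\big(\overline{C\cap{\uparrow}c}\big)$, $c\in C$ (these are subsemilattices because closures of chains in the ${\uparrow}{\downarrow}$-closed space $Y$ are chains, Lemma~\ref{l:cl-chain}); any point $x$ of the non-empty intersection satisfies $h(x)\in\bar C\cap\bigcap_{c\in C}{\uparrow}c$, and then $\bar C\subset{\downarrow}b$ for every upper bound $b$ of $C$ forces $h(x)=\sup C\in S\cap\bar C$.

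The essential gap, however, is the second step. The claim that a complete subsemilattice of $Y$ is closed under hypotheses (1)--(3) \emph{is} the theorem; it is the main result of each of the three cited papers, and your proposal does not prove it. In case (1) the concrete argument you offer fails: if $s_\alpha\to y_0$ then $y_0s_\alpha\to y_0$ by joint continuity, but the net $(y_0s_\alpha)$ is neither decreasing nor contained in $S$ (every term involves the outside point $y_0\notin S$), so the completeness of $S$ cannot be applied to it; what actually carries case (1) in your write-up is the citation of \cite{BBc,BBm,Stepp1969}, which is the statement being proved. In cases (2) and (3) you stop, by your own admission, at the obstacle of converting convergence to $y_0$ into a monotone approximation inside $S$ --- and that conversion is exactly where the substantial work of \cite{BBseq} and \cite{BBR} lies. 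So the proposal establishes only the (repairable) reduction and leaves the core unproved; the two defensible write-ups are either the paper's own (cite \cite{BBm,BBseq,BBR} for (1)--(3) and deduce (4)), or a self-contained proof of the closedness step, which would be a paper-length undertaking rather than a paragraph.
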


The statements (1), (2), (3) of this theorem were proved in \cite{BBm}, \cite{BBseq} and \cite{BBR}, respectively. The statement (4) follows from (3).

\begin{problem}\label{prob:Scott} Is $\Zar_X=\DZar_X$ for any complete Hausdorff semitopological semilattice $X$?
\end{problem}

With the help of Theorem~\ref{t:Law}, we can prove the following characterization extending Theorem~\ref{t:final}.

\begin{theorem}\label{t:final2} For a complete semitopological semilattice $X$ the following conditions are equivalent:
\begin{enumerate}
\item the topology $\Zar_X$ is Hausdorff;
\item the topology $\Weak_X$ is Hausdorff;
\item $X$ is $\II$-separated;
\item $X$ is a $V$-semilattice satisfying the separation axiom $T_1$;
\item $X$ is a $T_0$-space and $\Weak_X=\Zar_X$;
\item $\Weak_X=\DZar_X$;
\item $\Weak_X=\Zar_X=\DWeak_X=\Lawson_X=\DZar_X$;
\item $X$ is functionally Hausdorff and the intrinsic topology $\DZar_X$ is Hausdorff.
\end{enumerate}
\end{theorem}

\begin{proof} The equivalences of the conditions $(1)$--$(5)$ were proved in Theorem~\ref{t:final}.
\smallskip

$(5)\Ra(6)$: Assume that $X$ is a $T_0$-space and $\Zar_X=\Weak_X$. By Theorem~\ref{t:Cc}, the topology $\Zar_X$ is compact. By Theorem~\ref{t:final},  the topology $\Zar_X$ is Hausdorff and being compact, is functionally Hausdorff. By Theorems~\ref{t:final} and \ref{t:Law},  $\Weak_X=\Zar_X=\DZar_X$.
\smallskip

$(6)\Ra(7)$: Assuimg that $\Weak_X=\DZar_X$ and taking into account that $\Weak_X\subset\Zar_X\subset\DZar_X$ and $\Weak_X\subset\DWeak_X\subset\Lawson_X\subset\DZar_X$, we conclude that $\Weak_X=\Zar_X=\DWeak_X=\Lawson_X=\DZar_X$.
\smallskip

$(7)\Ra(8)$: Assuming that $\Weak_X=\DZar_X$ and taking into account that the topology $\DZar_X$ satisfies the separation axiom $T_1$, we conclude that the weak topology $\Weak_X=\DZar_X$ is $T_1$ and (by its definition) is Tychonoff and hence functionally Hausdorff. Since $\DZar_X=\Weak_X\subset\tau_X$, the space $X$ is functionally Hausdorff.

The implication $(8)\Ra(1)$ follows from Theorem~\ref{t:Law}.
\end{proof}

\section{Acknowledgement} The authors express their sincere thanks to the referee for the very fruitful suggestion to explore the relation of the topologies $\Weak_X$, $\Zar_X$ and $\Law_X$ on a topologized semilattice $X$ to the intrinsic topologies $\Dedekind_X$, $\Scott_X$ and $\Lawson_X$ on $X$.



\begin{thebibliography}{}

\bibitem{BBm} T.~Banakh, S.~Bardyla,
\emph{Characterizing chain-finite and chain-compact topological semilattices},  Semigroup Forum {\bf 98}:2 (2019), 234--250. 

\bibitem{BBc} T.~Banakh, S.~Bardyla, {\em
Completeness and absolute $H$-closedness of topological
semilattices}, Topology Appl. {\bf 260} (2019) 189--202. 

\bibitem{BBseq} T.~Banakh, S.~Bardyla, {\em On images of complete subsemilattices in sequential  semitopological  semilattices}, Semigroup Forum, {\bf 100} (2020) 662–670. 

\bibitem{BBo} T.~Banakh, S.~Bardyla, {\em Complete topologized posets and semilattices}, Topology Proc. {\bf 57} (2021) 177--196.

\bibitem{BBR} T.~Banakh, S.~Bardyla, A.~Ravsky, {\em  The closedness of complete subsemilattices in functionally Hausdorff semitopological semilattices}, Topology Appl. {\bf 267} (2019) 106874.






\bibitem{Bardyla-Gutik-2012}
S.~Bardyla, O.~Gutik,
\emph{On $\mathscr{H}$-complete topological semilattices},
Mat. Stud. {\bf 38}:2 (2012) 118--123.


\bibitem{Bruns} G.~Bruns, {\em A lemma on directed sets and chains}, Arch. der Math. {\bf 18} (1967), 561--563.






\bibitem{CHK}
J.H.~Carruth, J.A.~Hildebrant,  R.J.~Koch, \emph{The Theory of
Topological Semigroups},
Vol. II, Marcel Dekker, Inc., New York and Basel, 1986.

\bibitem{CHR} W.W.~Comfort, K.H.~Hofmann, D.~Remus, {\em Topological groups and semigroups}, in: Recent progress in general topology (Prague, 1991),  North-Holland, Amsterdam, (1992) 57--144.

\bibitem{vD} E.K.~van Douwen, {\em The integers and topology}, in: Handbook of set-theoretic topology, North-Holland, Amsterdam, (1984) 111--167,

%
\bibitem{Engelking1989}
R.~Engelking, \emph{General Topology}, Heldermann,
Berlin, 1989.






\bibitem{Bible}
G.~Gierz, K.H.~Hofmann, K.~Keimel, J.D.~Lawson,
M.W.~Mislove, D.S.~Scott, \emph{Continuous Lattices and Domains}.
Cambridge Univ. Press, Cambridge, 2003.








\bibitem{GutikRepovs2008}
O.~Gutik, D.~Repov\v{s}, {\em On linearly ordered $H$-closed
topological semilattices}, Semigroup Forum \textbf{77}:3 (2008),
474--481.


\bibitem{Iwamura}  T. Iwamura, {\em A lemma on directed sets}, Zenkoku Shijo Sugaku Danwakai {\bf 262} (1944), 107--111 (in Japanese).


\bibitem{Law} J.~Lawson, {\em Intrinsic lattice and semilattice topologies},  Proceedings of the University of Houston Lattice Theory Conference (Houston, Tex., 1973), pp. 206--260.

\bibitem{Law69} J.D.~Lawson, {\em Topological semilattices with small semilattices}, J. London Math., \textbf{2} (1969), 719--724.

\bibitem{Law74} J.D.~Lawson, {\em Joint continuity in semitopological semigroups}, Illinois J. Math. \textbf{18}:2 (1974), 275--285.



\bibitem{Mark} G.~Markowsky, {\em Chain-complete posets and directed sets with applications}, Algebra Universalis, {\bf 6} (1976) 53-68.





\bibitem{Stepp1969}
J.W.~Stepp, {\it A note on maximal locally compact semigroups}.
Proc. Amer. Math. Soc. {\bf 20}  (1969), 251---253.

\bibitem{Stepp1975}
J.W.~Stepp, {\it Algebraic maximal semilattices}. Pacific J. Math.
{\bf 58}:1  (1975), 243---248.


\bibitem{Vai} R.~Vainio, {\em A maximal chain approach to topology and orde},  Internat. J. Math. Math. Sci. {\bf 11}:3 (1988), 465--472.


\end{thebibliography}
\end{document}